\renewcommand*{\backref}[1]{}
\renewcommand*{\backrefalt}[4]{%
    \ifcase #1 (Not cited.)
    \or        (Cited on page~#2.)
    \else      (Cited on pages~#2.)%
    \fi}
\newcommand{\N}{\mathbb{N}}
\newcommand{\R}{\mathbb{R}}
\renewcommand{\phi}{\varphi}
\renewcommand{\epsilon}{\varepsilon}
\newcommand{\E}{\mathbb{E}}
\DeclarePairedDelimiterX{\expdelim}[1]{[}{]}{#1} 
\newcommand{\Exp}[1]{\E\expdelim*{#1}}
\DeclarePairedDelimiterX{\expconddelim}[2]{[}{]}{#1\,\delimsize\vert\, \mathopen{}#2} 
\newcommand{\Expcond}[2]{\E\expconddelim*{#1}{#2}}
\theoremstyle{plain}
\newtheorem{theorem}{Theorem}[section]
\newtheorem{lemma}{Lemma}[section]
\newtheorem{assumption}{Assumption}
\theoremstyle{definition}
\theoremstyle{remark}
\crefname{assumption}{Assumption}{Assumptions}
\crefname{lemma}{Lemma}{Lemmas}
\crefname{theorem}{Theorem}{Theorems}
\crefname{equation}{}{} 
\title{Solving Nonconvex-Nonconcave Min-Max Problems exhibiting Weak Minty Solutions}
\author{Axel B\"ohm\thanks{Faculty of Mathematics, University of Vienna, Oskar-Morgenstern-Platz 1, 1090 Vienna, Austria.
    Research supported by the doctoral programme \textit{Vienna Graduate School on Computational Optimization (VGSCO)}, FWF (Austrian Science Fund), project W 1260. \texttt{axel.boehm@univie.ac.at}} }
\date{\today}
\begin{document}

\maketitle

\begin{abstract}
   We investigate a structured class of nonconvex-nonconcave min-max problems exhibiting so-called \emph{weak Minty} solutions, a notion which was only recently introduced and already prooved powerful by simultaneously capturing different generalizations of monotonicity. We prove novel convergence results for a generalized version of the optimistic gradient method (OGDA) in this setting, matching the $1/k$ rate for the best iterate in terms of the squared operator norm recently shown for the extragradient method (EG). In addition we propose an adaptive step size version of EG, which does not require knowledge of the problem parameters.
\end{abstract}

\section{Introduction}%
\label{sec:intro}
The recent success of machine learning models which can be described by min-max optimization, such as generative adversarial networks~\cite{GAN}, adversarial learning~\cite{madry2017towards}, adversarial example games~\cite{adversarial-example-games} or actor-critic methods~\cite{pfau2016connecting}, has sparked interest in such saddle point problems. While methods have been identified, which (mostly) work in practice, the setting in which the objective function is nonconvex in the minimization and nonconcave in the maximization component remains theoretically poorly understood and even shows intractability results~\cite{daskalakis2021complexity,lee2021fast}.
Recently,~\cite{daskalakis2020independent} studied a class of nonconvex-nonconcave min-max problems and observed that the extragradient method (EG) showed good converge behavior in the experiments. Surprisingly, the problems did not seem to exhibit any of the known tame properties such as monotonicity, or Minty solutions.
Later,~\cite{diakonikolas2021efficient} found the appropriate notion (see \Cref{ass:weak-minty}), which is weaker than the existence of a Minty solution (an assumption extensively used in the literature~\cite{malitsky2020golden,liu2020towards,liu2021first}) and also generalizes the concept of negative comonotonicity~\cite{negative-comonotone,combettes-cohypomonotone,lee2021fast}.
Due to these unifying and generalizing properties the notion of weak Minty solutions was promptly studied in~\cite{pethick2022escaping,lee2021semi}.

\begin{assumption}[Weak Minty solution]%
  \label{ass:weak-minty}
  Given an operator $F:\R^d \to \R^d$ there exists a point $u^*\in \R^d$ and a parameter $\rho>0$ such that
  \begin{equation}
    \label{eq:weak-minty}
    \langle F(u), u-u^* \rangle \ge -\frac{\rho}{2} \Vert F(u) \Vert^2 \quad \forall u \in \R^d.
  \end{equation}
\end{assumption}

Additionally,~\cite{diakonikolas2021efficient} proved that a generalization of EG~\cite{extragradient} is able to solve problems which exhibit such solutions with a complexity of $\mathcal{O}(\epsilon^{-1})$ for the squared operator norm. This modification which they title EG$+$, is based on an aggressive extrapolation step combined with a conservative update step. Such a step size policy has already been explored in the context of a stochastic version of EG in~\cite{hsieh2020explore}.

In a similar spirit we investigate a variant of the optimistic gradient descent ascent (OGDA)~\cite{daskalakis2017training,popov}/Forward-Reflected-Backward (FoRB)~\cite{malitsky2020forward} method. We pose the question, and give an affirmative answer to:
\begin{center}
  \textit{Can OGDA match the convergence guarantees of EG in the presence of weak Minty solutions?}
\end{center}

In particular, we show that the following modification of the OGDA method, given for step size $a>0$ and parameter $0<\gamma \le 1$ by
\begin{equation*}
  \label{eq:ogda-one-line}
    u_{k+1} = u_k - a \Bigl((1+\gamma)F(u_k) - F(u_{k-1}) \Bigr), \quad \forall k\ge 0,
\end{equation*}
is able to match the bounds of EG$+$~\cite{diakonikolas2021efficient,pethick2022escaping}:
\begin{equation*}
  \label{eq:eg+}
    u_{k} = \bar{u}_k - a F(\bar{u}_k), \quad \bar{u}_{k+1} = \bar{u}_k - \gamma a F(u_k), \quad \forall k\ge 0,
\end{equation*}
by only requiring one gradient oracle call per iteration. In \Cref{fig:lower-bound} we see that beyond the theoretical guarantees OGDA$+$ can even provide convergence where EG$+$ does not.

Note that OGDA is most commonly written in the form where $\gamma=1$, see~\cite{daskalakis2017training,malitsky2020forward,bohm2022two}, with the exception of two recent works which have investigated a more general coefficient see~\cite{ryu2019ode,mokhtari2020unified}.
While the previous references target the monotone setting the true importance of $\gamma$ only shows up in the presence of weak Minty solutions as in this case we \emph{require} it to be larger than $1$ to guarantee convergence --- a phenomenon not present for monotone problems.

\paragraph{Connection to min-max.}  When considering a general (smooth) min-max problem
\begin{equation*}
  \min_x \max_y \, f(x,y)
\end{equation*}
the operator $F$ mentioned in \Cref{ass:weak-minty} arises naturally as $F(u) := {[\nabla_x f(x,y), -\nabla_y f(x,y)]}$ with $u=(x,y)$. However, by studying saddle point problems from this more general perspective of variational inequalities (VIs), see~\eqref{eq:vi}, via the operator $F$ we can simultaneously capture more settings such as certain equilibrium problems, see~\cite{facchinei-pang-VI}.

\paragraph{About the weak Minty parameter $\rho$.} The parameter $\rho$ in the definition of weak Minty solutions~\eqref{eq:weak-minty} plays a crucial role in the analysis and the experiments. In particular it is necessary that the step size is larger than a term proportional to $\rho$, see for example \Cref{thm:ogda} or~\cite{pethick2022escaping}. At the same time, as typical, the step size is constrained from above by the reciprocal of the Lipschitz constant of $F$.
For example, since the authors of~\cite{diakonikolas2021efficient} require the step size to be less than $\frac{1}{L}$, their convergence statement only holds if $\rho <\frac{1}{4L}$ for the choice $\gamma=\frac12$. This was later improved in~\cite{pethick2022escaping} to $\frac1L$ for $\gamma$ even smaller.
As in the monotone setting, OGDA however, requires a smaller step size than EG.\ Nevertheless, through a different analysis we are able to match the most general condition on the weak Minty parameter $\rho < \frac1L$ for appropriate $\gamma$ and $a$.

\paragraph{Contribution.}
\begin{enumerate}
  \item Building on the recently introduced notion of weak solutions to the Minty variational inequality, see~\cite{diakonikolas2021efficient}, we prove a novel convergence rate of $\mathcal{O}(1/k)$ in terms of the squared operator norm for a modification of OGDA, which we name OGDA$+$, matching the one of EG.
  \item Even under the stronger assumption that the operator is moreover monotone we improve the possible range of step sizes for OGDA$+$~\cite{ryu2019ode} and recover the best known result for the standard method ($\gamma=1$)~\cite{gorbunov2022last}.
  \item We prove a complexity bound of $\mathcal{O}(\epsilon^{-2})$ for a stochastic version of the OGDA$+$ method.
  \item Additionally, we propose an adaptive step size version of EG$+$, which is able to obtain the same convergence guarantees without any knowledge of the Lipschitz constant of the operator $F$, and therefore possibly even take larger steps in regions of low curvature and allow for convergence where a fixed step size policy does not.
\end{enumerate}

\begin{figure}[ht]
  \centering
  \begin{subfigure}[b]{0.40\linewidth}
    \centering
    \includegraphics[width=\linewidth]{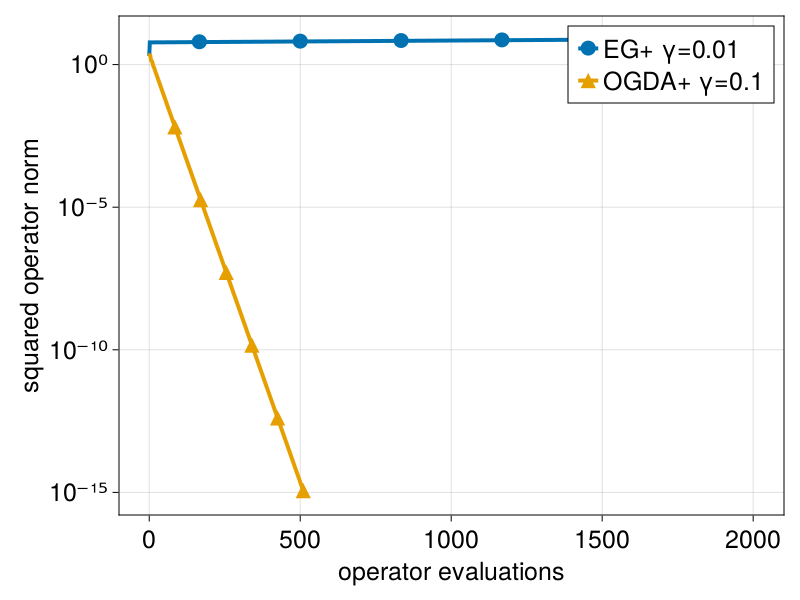}
    \caption{Performance of methods.}
  \end{subfigure}
  \hspace{.5cm}
  \begin{subfigure}[b]{0.40\linewidth}
    \centering
    \includegraphics[width=\linewidth]{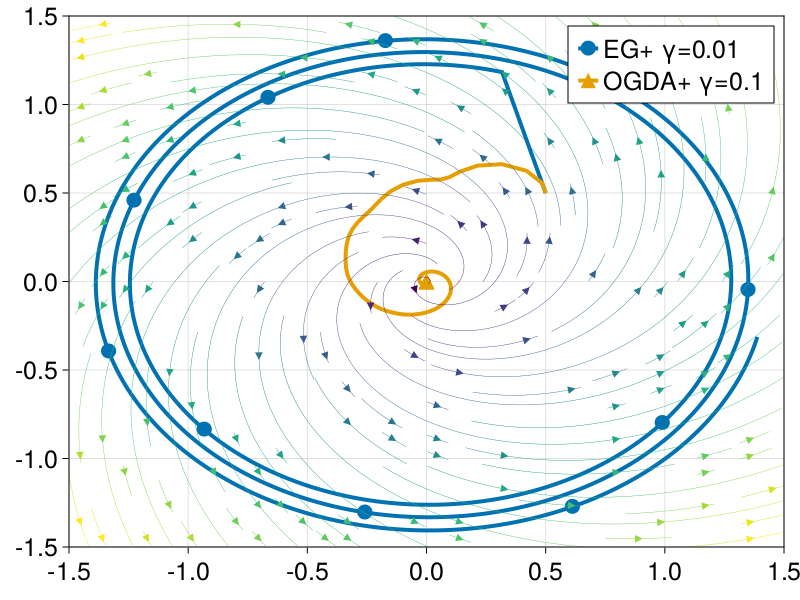}
    \caption{Trajectories of iterates.}
  \end{subfigure}
  \caption{Problem~\eqref{eq:lower-bound}, with $\rho=\frac{1}{L}$, meaning that convergence is not covered by theory. Since the Lipschitz constant can be computed analytically we choose the step size accordingly. Due to the linearity of the operator $F$ there is no benefit in using linesearch, so only methods using constant step sizes are compared. \textbf{Only OGDA$+$ converges}.\label{fig:lower-bound}}
\end{figure}

\subsection{Related literature}%

Since there is an extensive literature on convergence rates in terms of a gap function or distance to a solution for monotone problems as well as generalizations such as nonconvex-concave~\cite{bot2020alternating,lin2020gradient}, convex-nonconcave~\cite{xu2020unified} or under the Polyak-{\L}ojasiewicz assumption, see~\cite{yang2020global}, we will only focus on the nonconvex-\emph{non}concave setting.

\paragraph{Weak minty.} \cite{diakonikolas2021efficient} noticed that a particular parametrization of the von Neumann ratio game exhibits a new type of solution, which they titled weak Minty, without possessing any of the known properties such as (negative) comonotonicity or Minty solutions. They showed convergence in the presence of such solutions for EG if the extrapolation step size is twice as large as the update step. Later~\cite{pethick2022escaping} showed that the condition on the weak Minty parameter can be relaxed by reducing the length of the update step even further and they do so in an adaptive way. In order to not require any other hyperparameters they also propose a backtracking line search, which might come at the cost of additional gradient computations or the use of second order information (in contrast to the adaptive step size we propose in \Cref{egpa}). In~\cite{lee2021semi} a different approach is taken by restricting the attention to the min-max setting and using multiple ascent steps per descent step, obtaining the same $\mathcal{O}(1/k)$ rate as EG.\

\paragraph{Minty solutions.} Many works have shown different approaches for when the problem at hand exhibits a Minty solution, see~\eqref{eq:minty}. The authors of~\cite{liu2021first} showed that weakly monotone VIs can be solved by successively adding a quadratic proximity term and repeatedly optimizing the resulting strongly monotone VI with any convergent method. In~\cite{mertikopoulos2019optimistic} the convergence of the OGDA method was proven, but without any rate. In~\cite{malitsky2020golden} it was noted that the convergence proof for the golden ratio algorithm (GRAAL) works without any modification. See also~\cite{dang2015convergence} for a non-euclidean version of EG and~\cite{liu2020towards} for adaptive methods. While the assumption that a Minty solution exists is a generalization of the monotone setting it is difficult to find nonmonotone problems that do possess such solutions. In our setting, see \Cref{ass:weak-minty}, the Minty inequality~\eqref{eq:minty} is allowed to be violated at every point by a factor proportional to the squared operator norm.

\paragraph{Negative comonotonicity.} While previously studied under the name of \emph{cohypomonotonicity}~\cite{combettes-cohypomonotone} the notion of negative comonotonicity was recently explored in~\cite{negative-comonotone}. It provides a generalization of monotonicity, but in a direction different from the notion of Minty solutions and only a few works have analyzed methods in this setting.
The authors of~\cite{lee2021fast} studied an anchored version of EG and showed an improved convergence rate of $\mathcal{O}(1/k^2)$ (in terms of the squared operator norm). Similarly,~\cite{cai2022accelerated} studied an accelerated version of the reflected gradient method \cite{malitsky2015reflection}. It is an open question whether such an acceleration is possible in the more general setting of weak Minty solutions (any Stampacchia solution to the VI given by negatively comonotone operator is a weak Minty solution). Another interesting observation was made in~\cite{gorbunov2022convergence} where for cohypomonotone problems monotonically decreasing gradient norm was shown when using EG.\ However, we did not observe this in our experiments, highlighting the need to distinguish this class from problems with weak Minty solutions.

\paragraph{Interaction dominance.} The authors of~\cite{grimmer2022landscape} investigate the notion of \emph{$\alpha$-interaction dominance} for nonconvex-nonconcave min-max problems and showed that the proximal-point method converges sublinearly if this condition holds in $y$ and linearly if it holds in both components. Furthermore~\cite{lee2021fast} showed that if a problem is interaction dominant in both components, then it is also negatively comonotone.

\paragraph{Optimism.} The beneficial effects of introducing the simple modification commonly known as optimism have recently sparked the interest of the machine learning community~\cite{daskalakis2017training,daskalakis2018limit,liang2019interaction,gidel2019variational}. Its name originates from online optimization~\cite{omd,omd2}. The idea dates back even further~\cite{popov} and has been studied in the mathematical programming community as well~\cite{malitsky2015reflection,malitsky2020forward,csetnek2019shadow}.

\section{Preliminaries}%
\label{sec:}

\subsection{Notions of solution}
We summarize the most commonly used notions of solutions appearing in the context of variational inequalities (VIs) and beyond. Note that these are commonly defined in terms of a constraint set $C\subset \R^d$.
A \emph{Stampacchia}\footnote{Sometimes Stampacchia and Minty solutions are referred to as \emph{strong} and \emph{weak} solutions respectively, see~\cite{nesterov2007dual}, but we will refrain from this nomenclature, as it is confusing in the context of \emph{weak Minty} solutions.}~\cite{stampacchia} solution of the VI given by $F:\R^d\to\R^d$ is defined as a point $u^*$ such that
\begin{equation}\tag{SVI}
  \label{eq:vi}
  \langle F(u^*), u-u^* \rangle \ge 0 \quad \forall u \in C.
\end{equation}
In particular we only consider in this manuscript the unconstrained case $C=\R^d$ in which case the above condition reduces to $F(u^*)=0$.
Very much related with a long tradition is the following.
A \emph{Minty}\footnotemark[1] solution is given by a point $u^* \in C$ such that
\begin{equation}\tag{MVI}
  \label{eq:minty}
  \langle F(u), u-u^* \rangle \ge 0 \quad \forall u \in C.
\end{equation}
If $F$ is continuous, a Minty solution of the VI is always a Stampacchia solution. The reverse is in general not true but holds for example if the operator $F$ is monotone. In particular, there exist nonmonotone problems with Stampacchia solutions but without any Minty solutions.

\subsection{Notions of monotonicity}%
The aim of this section is to recall some elementary and some more recent notions of monotonicity and the connection between those.
We call an operator $F$ \textbf{monotone} if
\begin{equation*}
  \langle F(u)-F(v), u-v \rangle \ge 0.
\end{equation*}
Such operators arise naturally as the gradients of convex functions, from convex-concave min-max problems or from equilibrium problems.

Two notions frequently studied that fall in this class are \textbf{strongly monotone} operators fulfilling
\begin{equation*}
  \langle F(u)-F(v), u-v \rangle \ge \mu \Vert u-v \Vert^2.
\end{equation*}
They appear as gradients of strongly convex functions or strongly-convex-strongly-concave min-max problems.
A second subclass of monotone operators are so-called \textbf{cocoercive} operators fulfilling
\begin{equation}
  \label{eq:cocoercive}
  \langle F(u)-F(v), u-v \rangle \ge \beta \Vert F(u)-F(v) \Vert^2.
\end{equation}
They appear, for example, as gradients of smooth convex functions, in which case \Cref{eq:cocoercive} holds with $\beta$ equal to the reciprocal of the gradients Lipschitz constant.

\paragraph{Leaving the monotone world.} Both subclasses of monotonicity introduced above can be used as starting points to venture into the non-monotone world. Since general non-monotone operators might exhibit erratic behavior like periodic cycles and spurious attractors~\cite{hsieh2021limits}, it makes sense to find settings that extend the monotone one, but still remain tractable.
First and foremost, the by now well-studied setting of $\nu$-\textbf{weak monotonicity}
\begin{equation*}
  \langle F(u)-F(v), u-v \rangle \ge -\nu \Vert u-v \Vert^2.
\end{equation*}
Such operators arise as the gradients of the well-studied class (see~\cite{dima_damek_stoch_weakly_k-4}) of weakly convex functions --- a rather generic class of functions as it includes all functions \textit{without upward cusps}. In particular every smooth function with Lipschitz gradient turns out to fulfill this property.
On the other hand, extending the notion of cocoercivity to allow for negative coefficients, referred to as \textbf{cohypomonotonicity}, has received much less attention~\cite{negative-comonotone,combettes-cohypomonotone} and is given by
\begin{equation*}
  \langle F(u)-F(v), u-v \rangle \ge - \gamma \Vert F(u)-F(v) \Vert^2.
\end{equation*}
Clearly, if there exists a Stampacchia solution for such an operator, then it also fulfills \Cref{ass:weak-minty}.

\paragraph{Behavior w.r.t.\ the solution.} While the above properties are
standard assumption in the literature,
it is usually sufficient to ask for the corresponding condition to hold when one of the arguments is a (Stampacchia) solution. This means instead of monotonicity it is enough to obtain standard convergence results, see~\cite{gorbunov2022extragradient}, to ask for the operator $F$ to be \textbf{star-monotone}~\cite{star-monotone}, i.e.\
\begin{equation*}
  \langle F(u), u-u^* \rangle \ge 0
\end{equation*}
or \textbf{star-cocoercive}~\cite{gorbunov2022extragradient}
\begin{equation*}
  \langle F(u), u-u^* \rangle \ge \gamma \Vert F(u) \Vert^2.
\end{equation*}
In this spirit, we can provide a new interpretation to the assumption of the existence of a weak Minty solution as asking for the operator $F$ to be \textbf{negatively star-cocoercive} (with respect to at least one solution). Furthermore, we want to point out that while the above star notions are sometimes required to hold for all solutions $u^*$, in the following we only require it to hold for a single solution.

\section{OGDA for problems with weak Minty solutions}%

The generalized version of OGDA, whose name we equip in the spirit of~\cite{diakonikolas2021efficient,pethick2022escaping}, with a ``$+$'' to highlight the presence of the additional parameter $\gamma$, is given by:
\begin{algorithm}[H]
  \caption{OGDA$+$}
  \algorithmicrequire{ Starting point $u_0=u_{-1} \in \R^d$, step size $a>0$ and parameter $0<\gamma \le 1$.} 
  \begin{algorithmic}\label{ogda}
    \For{$k = 0,1, \dots$}
    \State{$u_{k+1} = u_k - a \Big(\left(1+ \gamma\right)F(u_k) - F(u_{k-1}) \Big)$}
    \EndFor{}
  \end{algorithmic}
\end{algorithm}

\begin{theorem}%
  \label{thm:ogda}
  Let $F:\R^d \to \R^d$ be $L$-Lipschitz continuous satisfying \cref{ass:weak-minty} with $\frac1L > \rho$ and ${(u_k)}_{k\ge0}$ be the iterates generated by \Cref{ogda} with step size $a$ satisfying $a>\rho$ and
  \begin{equation}
    \label{eq:ogda-step-size}
    a L \le \frac{1-\gamma}{1+\gamma}.
  \end{equation}
  Then, for all $k\ge0$
  \begin{equation*}
    \label{eq:ogda-plus-thm}
    \min_{i=0,\dots, k-1} \Vert F(u_i) \Vert^2 \le \frac{1}{k a \gamma (a-\rho)} \Vert u_0 + a F(u_{0}) -u^* \Vert^2.
  \end{equation*}
  In particular as long as $\rho < \frac{1}{L}$ we can find a $\gamma$ small enough such that the above bound holds.
\end{theorem}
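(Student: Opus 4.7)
The plan is to reduce OGDA+ to a one-step recursion on an auxiliary sequence, apply the familiar descent-lemma-plus-Lyapunov-plus-telescoping template, and let Lipschitz continuity absorb the cross-step term $\Vert F(u_k) - F(u_{k-1}) \Vert^2$ that the reflection introduces. Setting $z_k := u_k + a F(u_{k-1})$, direct manipulation of the update yields
\begin{equation*}
z_{k+1} - z_k = (u_{k+1} - u_k) + a\bigl(F(u_k) - F(u_{k-1})\bigr) = -a\gamma F(u_k),
\end{equation*}
so $z_k$ obeys a simple gradient-style recursion. Since $u_{-1} = u_0$, we also have $z_0 = u_0 + aF(u_0)$, which is exactly the quantity on the right-hand side of the claim.

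Expanding $\Vert z_{k+1} - u^* \Vert^2$ along this recursion, splitting $z_k - u^* = (u_k - u^*) + a F(u_{k-1})$, invoking \cref{ass:weak-minty} on $\langle F(u_k), u_k - u^* \rangle$, and applying the polarization identity to $-2 a^2\gamma \langle F(u_k), F(u_{k-1}) \rangle$ produces the one-step bound
\begin{equation*}
\Vert z_{k+1} - u^* \Vert^2 - \Vert z_k - u^* \Vert^2 \le -a\gamma\bigl(a(1-\gamma) - \rho\bigr) \Vert F(u_k) \Vert^2 + a^2\gamma \Vert F(u_k) - F(u_{k-1}) \Vert^2 - a^2\gamma \Vert F(u_{k-1}) \Vert^2.
\end{equation*}
The cross term $\Vert F(u_k) - F(u_{k-1}) \Vert^2$ is the only obstacle to direct telescoping. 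I would control it through the Lipschitz bound $\Vert F(u_k) - F(u_{k-1}) \Vert^2 \le L^2 \Vert u_k - u_{k-1} \Vert^2$ together with the identity $u_k - u_{k-1} = -a\gamma F(u_{k-1}) - a\bigl(F(u_{k-1}) - F(u_{k-2})\bigr)$, so that a weighted Young's inequality splits $\Vert u_k - u_{k-1} \Vert^2$ into a multiple of $\Vert F(u_{k-1}) \Vert^2$ (which is dominated by the negative $-a^2\gamma \Vert F(u_{k-1}) \Vert^2$ term already present) and a multiple of $\Vert F(u_{k-1}) - F(u_{k-2}) \Vert^2$ (a one-step-shifted replica of the cross term itself). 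This motivates a Lyapunov of the form
\begin{equation*}
\Phi_k := \Vert z_k - u^* \Vert^2 + B \Vert F(u_{k-1}) - F(u_{k-2}) \Vert^2
\end{equation*}
with $B > 0$ chosen (together with the Young's weight) against the hypothesis $aL(1+\gamma) \le 1-\gamma$ to yield the clean descent $\Phi_{k+1} - \Phi_k \le -a\gamma(a - \rho)\Vert F(u_k) \Vert^2$. Summing from $k = 0$ to $N-1$, dropping $\Phi_N \ge 0$, and using $\Phi_0 = \Vert u_0 + a F(u_0) - u^* \Vert^2$ (setting $u_{-2} = u_{-1} = u_0$ kills the correction at $k=0$) then delivers the claimed bound on $\min_i \Vert F(u_i) \Vert^2$.

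The hard part is this final optimization: the coefficient of $\Vert F(u_k) \Vert^2$ falling out of the one-step inequality is only $a\gamma(a(1-\gamma) - \rho)$, so upgrading it to the sharper $a\gamma(a - \rho)$ consumes some of the slack in the step size budget, which is what forces $aL \le (1-\gamma)/(1+\gamma)$ rather than the looser $aL(1+\gamma) \le 1$ that a minimalist cross-term absorption would need. The role of $\gamma > 0$ also becomes transparent here: setting $\gamma = 0$ collapses both the driving term $-a\gamma F(u_k)$ and the entire Lyapunov decrease to zero, so the over-extrapolated step is indispensable in the weak Minty setting, in sharp contrast to the monotone case.
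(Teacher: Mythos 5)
Your setup is the right one: the auxiliary sequence $z_k := u_k + a F(u_{k-1})$ satisfying $z_{k+1} = z_k - a\gamma F(u_k)$ is exactly the quantity the paper telescopes, and your one-step bound
\begin{equation*}
\Vert z_{k+1} - u^* \Vert^2 - \Vert z_k - u^* \Vert^2 \le -a\gamma\bigl(a(1-\gamma) - \rho\bigr) \Vert g_k \Vert^2 - a^2\gamma \Vert g_{k-1} \Vert^2 + a^2\gamma \Vert g_k - g_{k-1} \Vert^2
\end{equation*}
is correct. The gap is in the absorption step. You propose the Lyapunov $\Phi_k := \Vert z_k - u^* \Vert^2 + B \Vert g_{k-1} - g_{k-2} \Vert^2$ and claim it delivers the clean per-step descent $\Phi_{k+1} - \Phi_k \le -a\gamma(a-\rho)\Vert g_k\Vert^2$. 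That cannot happen: the coefficient of $\Vert g_k\Vert^2$ in $\Phi_{k+1}-\Phi_k$ is exactly $-a\gamma(a(1-\gamma)-\rho)$, and no choice of $B\ge0$ or Young weight $\epsilon>0$ can make it more negative, because the Lipschitz bound $\Vert g_k - g_{k-1}\Vert^2 \le L^2\Vert u_k - u_{k-1}\Vert^2$ combined with the update $u_k - u_{k-1} = -a\gamma g_{k-1} - a(g_{k-1}-g_{k-2})$ produces only $\Vert g_{k-1}\Vert^2$ and $\Vert g_{k-1}-g_{k-2}\Vert^2$ terms, never $\Vert g_k\Vert^2$. In other words, the useful negative term $-a^2\gamma\Vert g_{k-1}\Vert^2$ sits at the wrong index and cannot be shifted forward by any nonnegative Lyapunov correction: forcing the coefficient to $-a\gamma(a-\rho)$ would require adding $c\Vert g_{k-1}\Vert^2$ to $\Phi_k$ with $c\le -a^2\gamma^2<0$. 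Carried through honestly, your scheme proves at best $\min_i\Vert g_i\Vert^2 \lesssim 1/(k a\gamma(a(1-\gamma)-\rho))$ under the (incomparable) condition $a > \rho/(1-\gamma)$, which is strictly weaker than the claim whenever $\gamma$ is close to $1$.

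The paper avoids this by never fully expanding the update into squared gradient norms. It keeps $\Vert u_{k+1}-u_k\Vert^2$ and the cross term $\langle u_k - u_{k+1}, g_k - g_{k-1}\rangle$ as structural units (Lemma~\ref{lem:ogda-general}), adds the scaled identity $a^2\lambda\gamma^2\Vert g_k\Vert^2 = \lambda\Vert u_{k+1}-u_k\Vert^2 + a^2\lambda\Vert g_k - g_{k-1}\Vert^2 + 2a\lambda\langle u_{k+1}-u_k, g_k - g_{k-1}\rangle$ with $\lambda = \gamma^{-1}$ to trade $\Vert g_k\Vert^2$ on the good side for iterate-difference terms on the other, applies Young with weight $L$, and only then invokes Lipschitz to telescope $\Vert u_{k+1}-u_k\Vert^2$ against $\Vert u_k - u_{k-1}\Vert^2$. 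Because $\Vert u_{k+1}-u_k\Vert^2$ genuinely encodes $g_k$, this bookkeeping recovers the full $a\gamma(a-\rho)$ per iteration, which is precisely what your premature expansion via the polarization identity throws away. To repair your argument you would need to reintroduce that coupling, for instance by working with a Lyapunov in $\Vert u_k - u_{k-1}\Vert^2$ and the cross inner product rather than $\Vert g_{k-1} - g_{k-2}\Vert^2$, which essentially reproduces the paper's proof.
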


The first observation is that we would like to choose $a$ as large as possible as this allows us to treat the largest class of problems with $\rho < a$. In order to be able to choose the step size $a$ large we have to decrease $\gamma$ as evident from \cref{eq:ogda-step-size}. This, however degrades the speed of the algorithm as it makes the update steps smaller --- the same effect can be observed~\cite{pethick2022escaping} for EG$+$ and is therefore not surprising. One could derive an optimal $\gamma$ (i.e.\ minimizing the right hand side) from \Cref{eq:ogda-plus-thm}, which however results in a uninsightful cubic dependence on $\rho$. In practice, the strategy of decreasing $\gamma$ until we get convergence, but not further gives reasonable results.

Furthermore, we want to point out that the condition $\rho < \frac{1}{L}$, is precisely the best possible bound for EG$+$ in~\cite{pethick2022escaping}.

\begin{figure*}
  \centering
  \begin{subfigure}[b]{0.4\linewidth}
    \centering
    \includegraphics[width=\linewidth]{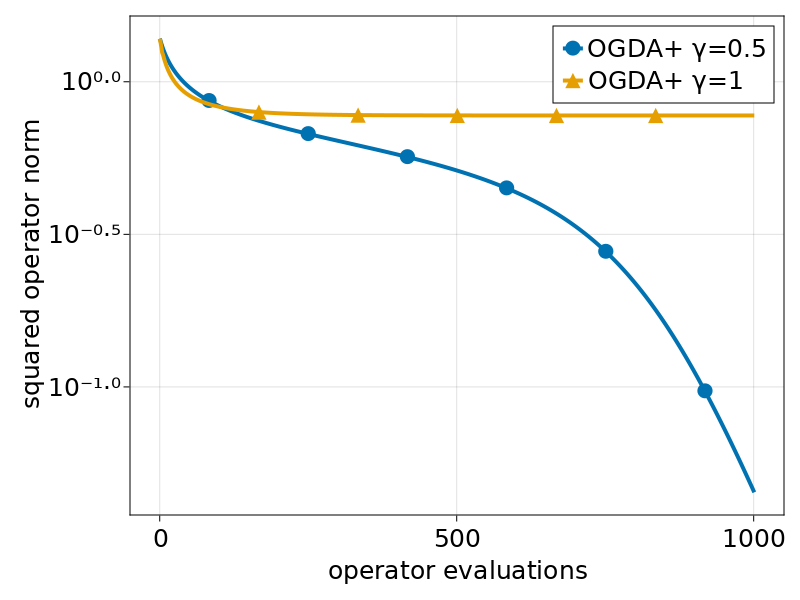}
    \caption{Performance of methods.}\label{fig:polar-performance}
  \end{subfigure}
  \hspace{.5cm}
  \begin{subfigure}[b]{0.4\linewidth}
    \centering
    \includegraphics[width=\linewidth]{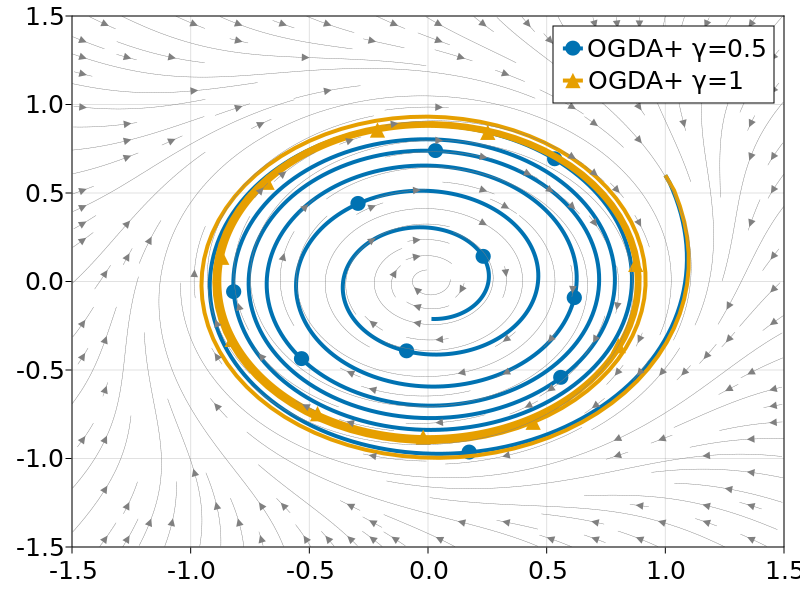}
    \caption{Trajectories of iterates.}\label{fig:polar-sign}
  \end{subfigure}
  \caption{\textbf{Polar game} from~\cite{pethick2022escaping} with $\frac{1}{8L}> \rho > \frac{1}{10L}$. Shows the need to reduce $\gamma$ in OGDA$+$.\label{fig:polar-game}}
\end{figure*}


\subsection{Improved bounds under monotonicity}%
While the above theorem also holds if the operator $F$ is monotone, we can modify the proof slightly to obtain a better dependence on the parameters:

\newtheorem*{thm:ogda-monotone}{Theorem \ref{thm:ogda-monotone}}
\begin{theorem}%
  \label{thm:ogda-monotone}
  Let $F: \R^d\to \R^d$ be monotone and $L$-Lipschitz. If $a L = \frac{2-\gamma}{2+\gamma} - \epsilon$ for $\epsilon > 0$
  then, the iterates generated by OGDA$+$ fulfill
  \begin{equation*}
      \min_{i=0, \dots, k-1 }\Vert F(u_i) \Vert^2 \le \frac{2}{k a^2 \gamma^2 \epsilon} \Vert u_0 + a F(u_{0}) -u^* \Vert^2.
  \end{equation*}
    In particular, we can choose $\gamma=1$ and $a<\frac{1}{3L}$.
\end{theorem}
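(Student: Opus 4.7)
The plan is to adapt the Lyapunov-function approach used in the proof of \Cref{thm:ogda}, modifying only the step where weak Minty is invoked. Define the potential
\[
V_k := \Vert u_k + a F(u_{k-1}) - u^*\Vert^2.
\]
This choice is natural because OGDA$+$ admits the compact reformulation $z_{k+1} = z_k - a\gamma F(u_k)$ with $z_k := u_k + a F(u_{k-1})$. Expanding $V_{k+1}$ and decomposing $z_k - u^* = (u_k - u^*) + a F(u_{k-1})$ produces the inner product $\langle F(u_k), u_k - u^*\rangle$, which by monotonicity together with $F(u^*) = 0$ is now nonnegative, whereas in the weak Minty setting it was only bounded below by $-\frac{\rho}{2}\Vert F(u_k)\Vert^2$. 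This is the sole substantive difference from the proof of \Cref{thm:ogda}.

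Applying the polarization identity to $\langle F(u_k), F(u_{k-1})\rangle$ together with the Lipschitz estimate $\Vert F(u_k) - F(u_{k-1})\Vert^2 \le L^2 \Vert u_k - u_{k-1}\Vert^2$ yields an inequality of the form
\[
V_{k+1} \le V_k - a^2 \gamma(1-\gamma) \Vert F(u_k)\Vert^2 - a^2 \gamma \Vert F(u_{k-1})\Vert^2 + a^2\gamma L^2 \Vert u_k - u_{k-1}\Vert^2.
\]
Using the algorithmic identity $u_k - u_{k-1} = -a[(1+\gamma) F(u_{k-1}) - F(u_{k-2})]$, an application of Young's inequality expresses the residual term as a combination of $\Vert F(u_{k-1})\Vert^2$ and $\Vert F(u_{k-2})\Vert^2$ (possibly after augmenting the Lyapunov function by a multiple of $\Vert F(u_k) - F(u_{k-1})\Vert^2$ to absorb any leftover Lipschitz contribution). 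Telescoping over $k = 0, \ldots, K-1$ collapses the $V_k$ differences and combines the $\Vert F(u_j)\Vert^2$ contributions across consecutive iterations; under the step size constraint $aL \le \frac{2-\gamma}{2+\gamma} - \epsilon$, the resulting net coefficient in front of $\sum_{j} \Vert F(u_j)\Vert^2$ is strictly negative with magnitude proportional to $a^2 \gamma^2 \epsilon$. Taking the minimum over $j$ and using the initialization $u_{-1} = u_0$ (so that $V_0 = \Vert u_0 + a F(u_0) - u^*\Vert^2$) yields the claimed $\mathcal{O}(1/k)$ rate.

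The main obstacle is matching the precise threshold $\frac{2-\gamma}{2+\gamma}$: naive Young's bounds such as $\Vert x - y\Vert^2 \le 2\Vert x\Vert^2 + 2\Vert y\Vert^2$ give strictly weaker step size conditions, so the Young parameter must be tuned so that the contributions of the two iterate-gradient norms balance optimally with the term generated by Lipschitzness. The improvement over \Cref{thm:ogda}, visible as the shift from $\frac{1-\gamma}{1+\gamma}$ to $\frac{2-\gamma}{2+\gamma}$, stems entirely from dropping the extra $\frac{\rho}{2}\Vert F(u_k)\Vert^2$ term that had to be absorbed under weak Minty; tracing this cleanly through the telescoping confirms the corollary $\gamma = 1$, $a < \frac{1}{3L}$.
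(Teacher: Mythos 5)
You correctly identify the Lyapunov function $V_k = \Vert u_k + aF(u_{k-1}) - u^*\Vert^2$ and the sole role of monotonicity (replacing the $-\frac{\rho}{2}\Vert F(u_k)\Vert^2$ lower bound by $0$); these match the paper. But the polarization step leads you astray. After polarization you obtain
\[
V_{k+1} \le V_k - a^2\gamma(1-\gamma)\Vert g_k\Vert^2 - a^2\gamma\Vert g_{k-1}\Vert^2 + a^2\gamma\Vert g_k - g_{k-1}\Vert^2,
\]
and using Lipschitzness plus $u_k - u_{k-1} = -a\bigl((1+\gamma)g_{k-1}-g_{k-2}\bigr)$ together with the sharp Young bound $\Vert \alpha x - \beta y\Vert^2 \le (\alpha+\beta)(\alpha\Vert x\Vert^2 + \beta\Vert y\Vert^2)$, the summed per-iterate coefficient on $\Vert g_j\Vert^2$ becomes $a^2\gamma\bigl[(aL)^2(2+\gamma)^2 - (2-\gamma)\bigr]$. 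This is negative only when $aL < \sqrt{2-\gamma}/(2+\gamma)$, which is strictly smaller than the claimed threshold $\frac{2-\gamma}{2+\gamma}$ for every $\gamma<1$; the two coincide only at $\gamma=1$. For instance, at $\gamma=\frac12$ with $aL$ slightly below $\frac{3}{5}$, your coefficient is positive and the telescoping argument yields no decrease at all, so the claimed rate cannot follow. Augmenting the Lyapunov function by $\beta\Vert u_k-u_{k-1}\Vert^2$ (or $\beta\Vert g_k-g_{k-1}\Vert^2$) does not rescue this: one checks that the optimal $\beta$ reproduces the same threshold.

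The paper's route is genuinely different and avoids polarization. In Lemma~\ref{lem:ogda-general} the term $-\langle 2a g_{k-1}-a\gamma g_k, a\gamma g_k\rangle$ is rewritten (exactly) as a combination of $\langle g_k-g_{k-1}, u_k-u_{k+1}\rangle$, a term $-(2\gamma^{-1}-1)\Vert u_{k+1}-u_k\Vert^2$, and $-a^2(1+2\gamma^{-1})\Vert g_k-g_{k-1}\Vert^2$; crucially, no $\Vert g_k\Vert^2$ appears, and the negative $\Vert u_{k+1}-u_k\Vert^2$ term is what absorbs the Lipschitz residual from the next iteration. The $\Vert g_k\Vert^2$ decrease is then \emph{purchased} by adding $\lambda$ times the algebraic identity \eqref{eq:new-squared-gradient}, with $\lambda>0$ a free parameter; Theorem~\ref{thm:generalized} shows this costs a step-size restriction $aL \le \frac{2-\lambda\gamma-\gamma}{2-\lambda\gamma+\gamma}$, which tends to $\frac{2-\gamma}{2+\gamma}$ as $\lambda\to 0$ while the rate constant scales like $1/(a^2\gamma^2\lambda)$. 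Theorem~\ref{thm:ogda-monotone} is then just the instantiation $\rho=0$, $\lambda=\epsilon$. The missing ingredient in your argument is precisely this $\lambda$-weighted trade-off: your polarization commits to $\lambda=\gamma^{-1}$ implicitly, which is what forces the weaker threshold.
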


There are different works discussing the convergence of OGDA in terms of the
iterates or a gap function with $a < \frac{1}{2L}$, see for
example~\cite{malitsky2020forward}. We, however, want to compare the above bound
to more similar results on rates for the best iterate in terms of the operator
norm. The same rate as ours for OGDA is shown in~\cite{chavdarova2021last} but
requires the conservative step size bound $a \le \frac{1}{16 L}$. This was later
improved to $a\le \frac{1}{3L}$ in~\cite{gorbunov2022last} where the bound
even holds for the last iterate.  However, all of these only deal with the case
$\gamma=1$. The only other reference that deals with a generalized (i.e.\ not
necessarily $\gamma=1$) version of OGDA is~\cite{ryu2019ode}. There the resulting
step size condition is $a \le \frac{2-\gamma}{4L}$, which is strictly worse than ours
for any $\gamma$. To summarize, not only do we show for the first time that the step
size of a generalization of OGDA can go above $\frac{1}{2L}$ we also provide the
least restrictive bound for any value of $\gamma$.

\subsection{OGDA$+$ stochastic}
In this section we discuss the setting where instead of the exact operator $F$
we only have access to a collection of independent estimators $\tilde{F}(\cdot, \xi_i)$
at every iteration. We assume here that estimator $\tilde{F}$ is unbiased
$\Expcond{\tilde{F}(u_k, \xi_i)}{u_{k-1}}=F(u_k)$, and has bounded variance
$\Exp{\Vert \tilde{F}(u_k, \xi_i)-F(u_k) \Vert^2} \le \sigma^2$, we show that we can still guarantee
convergence, by using batch sizes $B$ of order $\mathcal{O}(\epsilon^{-1})$:
\begin{algorithm}[H]
  \caption{stochastic OGDA$+$}
  \algorithmicrequire{ Starting point $u_0=u_{-1} \in \R^d$, step size $a>0$, parameter $0<\gamma \le 1$ and batch size $B$.} 
  \begin{algorithmic}\label{ogda-stoch}
    \For{$k = 0,1, \dots$}
    \State{Sample i.i.d.\ $(\xi_i)_{i=1}^B$ and compute estimator $\tilde{g}_k = \frac{1}{B}\sum_{i=1}^{B}\tilde{F}(u_k, \xi_i)$}
    \State{$u_{k+1} = u_k - a \Big(\left(1+ \gamma\right)\tilde{g}_k - \tilde{g}_{k-1} \Big)$}
    \EndFor{}
  \end{algorithmic}
\end{algorithm}

\newtheorem*{thm:ogda-stoch}{Theorem \ref{thm:ogda-stoch}}
\begin{theorem}%
  \label{thm:ogda-stoch}
  Let $F:\R^d \to \R^d$ be $L$-Lipschitz satisfying \cref{ass:weak-minty} with $\frac{1}{L} > \rho$ and let ${(u_k)}_{k\ge0}$ be the sequence of iterates generated by stochastic OGDA$+$, with $a$ and $\gamma$ satisfying $\rho < a< \frac{1-\gamma}{1+\gamma} \frac1L$
  then, to visit an $\epsilon$ stationary point such that $\min_{i=0,\dots,k-1} \Exp{\Vert \tilde{F}(u_i,\xi) \Vert^2} \le \epsilon $ we require
  \begin{equation*}
    \mathcal{O}\left(\frac{1}{\epsilon} \frac{1}{a \gamma (a - \rho)}\Exp{\Vert u_0 + a \tilde{g}_0 -u^* \Vert^2} \max\left\{1, \frac{4 \sigma^2}{aL } \frac{1}{\epsilon}\right\} \right)
  \end{equation*}
  calls to the stochastic oracle $\tilde{F}$, with large batch sizes of order $\mathcal{O}(\epsilon^{-1})$.
\end{theorem}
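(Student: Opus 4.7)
The plan is to adapt the proof of \Cref{thm:ogda} by substituting $\tilde g_k$ for $F(u_k)$ throughout the deterministic argument and by carefully tracking the residual variance terms, which are then absorbed via the batch size $B$. The algebraic core survives: introducing the shifted iterate
\begin{equation*}
\tilde z_k := u_k + a\,\tilde g_{k-1},
\end{equation*}
one obtains by a direct computation the one-line recursion $\tilde z_{k+1} = \tilde z_k - a\gamma\tilde g_k$, identical in form to the recursion underlying the proof of \Cref{thm:ogda}. Letting $\mathcal{F}_k$ be the natural filtration containing the samples $\xi_0,\dots,\xi_{k-1}$, so that $\Expcond{\tilde g_k}{\mathcal{F}_k}=F(u_k)$ and $\Expcond{\|\tilde g_k-F(u_k)\|^2}{\mathcal{F}_k}\le\sigma^2/B$, expanding $\|\tilde z_{k+1}-u^*\|^2$ and conditioning yields
\begin{equation*}
\Expcond{\|\tilde z_{k+1}-u^*\|^2}{\mathcal{F}_k} \le \|\tilde z_k-u^*\|^2 - 2a\gamma\langle F(u_k),u_k-u^*\rangle - 2a^2\gamma\langle F(u_k),\tilde g_{k-1}\rangle + a^2\gamma^2\bigl(\|F(u_k)\|^2+\tfrac{\sigma^2}{B}\bigr).
\end{equation*}

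Next I would estimate the two inner products. The first, $\langle F(u_k),u_k-u^*\rangle$, is bounded below by $-\tfrac{\rho}{2}\|F(u_k)\|^2$ via \cref{ass:weak-minty}. The second is split as $\langle F(u_k),F(u_{k-1})\rangle + \langle F(u_k),\tilde g_{k-1}-F(u_{k-1})\rangle$; the deterministic piece is treated exactly as in the proof of \Cref{thm:ogda} --- using $L$-Lipschitzness together with the recursion $u_k-u_{k-1}=-a((1+\gamma)\tilde g_{k-1}-\tilde g_{k-2})$ --- while the centered noise piece is handled by Young's inequality, contributing only an $\mathcal{O}(\sigma^2/B)$ term after taking expectation. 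The main obstacle is the bookkeeping needed to preserve the leading coefficient $a\gamma(a-\rho)$ in front of $\Exp{\|F(u_k)\|^2}$: the slack between $\rho$ and $a$ provided by the hypothesis $\rho<a<\frac{1-\gamma}{1+\gamma}\frac{1}{L}$ must absorb both the deterministic Lipschitz cross-terms \emph{and} the additional Young's splits used to isolate the noise, without eroding this constant.

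Telescoping the resulting per-step inequality over $k=0,\dots,N-1$ produces
\begin{equation*}
\min_{0\le i<N}\Exp{\|F(u_i)\|^2} \le \frac{1}{Na\gamma(a-\rho)}\Exp{\|u_0+a\tilde g_0-u^*\|^2} + \frac{C\sigma^2}{a\gamma(a-\rho)B}
\end{equation*}
for an explicit constant $C=C(a,\gamma,L)$. Converting this bound on $\Exp{\|F(u_i)\|^2}$ into the stated quantity $\Exp{\|\tilde F(u_i,\xi)\|^2}$ adds a further variance contribution of the same order $\sigma^2/B$. Balancing the two error terms by choosing $N=\Theta(1/\epsilon)$ and $B=\Theta(\sigma^2/\epsilon)$ drives each to $\mathcal{O}(\epsilon)$, and the total oracle complexity $NB=\mathcal{O}(\epsilon^{-2})$ announced in the theorem follows.
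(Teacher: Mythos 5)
Your proposal follows essentially the same strategy as the paper: carry the Lyapunov function $\Vert u_k + a\tilde g_{k-1} - u^*\Vert^2$ from the deterministic proof, take conditional expectations so that \Cref{ass:weak-minty} can be applied to the true operator, isolate the noise with a $\lambda$-Young split, telescope, and then size the minibatch $B$ so the variance terms sit at the $\epsilon$ level. Where you diverge slightly is in \emph{where} the signal/noise separation happens: you propose peeling off the centered noise at the inner-product level, splitting $\langle F(u_k),\tilde g_{k-1}\rangle$ into $\langle F(u_k),F(u_{k-1})\rangle$ plus a noise cross-term, and then treating the deterministic piece by the \Cref{thm:ogda} algebra. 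The paper instead carries the noisy gradients $\tilde g_k,\tilde g_{k-1}$ intact through the entire identity analogous to~\eqref{eq:crazy} (which is exact for the $\tilde g$'s as well, since it only uses the update rule, not any statistical property), and introduces the $\lambda$-Young split only once, at the very end, on $\Vert\tilde g_k - \tilde g_{k-1}\Vert^2$ before invoking Lipschitz continuity. That single late split is what lets \Cref{lem:ogda-stochastic-descent} keep the telescoping structure clean, with constants $\tilde C_1,\tilde C_2$ that reduce to the deterministic step-size condition when $\lambda\to 0$, and it is also what produces the explicit $(1+\lambda^{-1})\le 1/(aL)$ bound behind the $4\sigma^2/(aL\epsilon)$ factor in the complexity. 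Your earlier split can be made to work but generates more cross-terms between noise and the $g_{k}-g_{k-1}$ differences, which is exactly the bookkeeping burden you correctly flag; the paper's ordering sidesteps it. One small remark: the paper's proof actually bounds $\Exp{\Vert F(u_i)\Vert^2}$ rather than $\Exp{\Vert\tilde F(u_i,\xi)\Vert^2}$ (the appendix restatement uses the former), so the additional conversion step you mention at the end, while a valid observation, is not something the paper carries out.
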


In practice large batch sizes of order $\mathcal{O}(\epsilon^{-1})$ are typically not desirable, but rather a small or decreasing step size is preferred. In the weak Minty setting this is cause for additional trouble due to the necessity of large step sizes to guarantee convergence. See in this context the heuristic variant proposed in~\cite{pethick2022escaping} which decreases the parameter corresponding to $\gamma$ in our setting. Unfortunately the current analysis does not allow for variable $\gamma$.

\section{EG$+$ with adaptive step sizes}%

In this section we present \Cref{egpa} that is able to solve the previously mentioned problems without any knowledge of the Lipschitz constant $L$, as it is typically difficult to compute in practice. Additionally, it is well known that rough estimates will lead to small step sizes and slow convergence behavior. However, in the presence of weak Minty solutions there is additional interest in choosing large step sizes. We observed in \Cref{thm:ogda} and related works such as~\cite{diakonikolas2021efficient} and~\cite{pethick2022escaping} the fact that a crucial ingredient in the analysis is that the step size is chosen larger than a multiple of the weak Minty parameter $\rho$ to guarantee convergence at all. For these reasons we want to outline a method using adaptive step sizes, meaning that no step size needs to be supplied by the user and no line-search is carried out.\\
Since the analysis of OGDA$+$ is already quite involved in the constant step size regime we choose to equip EG$+$ with an \textit{adaptive step size} which estimates the inverse of the (local) Lipschitz constant, see~\eqref{eq:adap-step size-eg}.
Due the fact that the literature on adaptive methods, especially in the context of VIs is so vast we do not aim to give a comprehensive review but highlight only few with especially interesting properties.
In particular we do not want to touch on methods with linesearch procedure which typically result in multiple gradient computations per iteration, such as~\cite{tseng,malitsky2020forward}.

We use a simple and therefore widely used step size choices which naively estimates the local Lipschitz constant and forces a monotone decreasing behavior. Such step sizes have been used extensively for monotone VIs, see~\cite{yang2019modified,bot2020relaxed}, and similarly in the context of the mirror-prox method which corresponds to EG in the setting of (non-euclidean) Bregman distances, see~\cite{antonakopoulos2019adaptive}.

A version of EG with a different adaptive step size choice has been
investigated by~\cite{antonakopoulos2021adaptive-eg,bach2019universal} with the
unique feature that it is able to achieve the optimal rates for both smooth and
nonsmooth problems without modification.  However, these rates are only for
monotone VIs and are in terms of the gap function.

One of the drawbacks of adaptive methods resides in the fact that the step sizes are typically required to be nonincreasing which results in poor behavior if a high curvature area was visited by the iterates before reaching a low curvature region. To the best of our knowledge the only method which is allowed to use nonmonotone step size to treat VIs, and does not use a possibly costly linesearch, is the golden ratio algorithm~\cite{malitsky2020golden}. It comes with the additional benefit of not requiring a global bound on the Lipschitz constant of $F$ at all. While it is known that this method converges under the stronger assumption of existence of Minty solutions, a quantitative convergence result is still open.

\begin{algorithm}[H]
  \caption{EG$+$ with adaptive step size}
  \algorithmicrequire{ Starting points $u_0,\bar{u}_0 \in \R^d$, initial step size $a_0$ and parameter $\tau\in(0, 1)$ and $0<\gamma \le 1$.} 
  \begin{algorithmic}[1]\label{egpa}
    \For{$k = 0,1, \dots$}
    \State{Find the step size:}
    \begin{equation}
      \label{eq:adap-step size-eg}
      a_k = \min \left\{ a_{k-1},  \frac{\tau \Vert u_{k-1}-\bar{u}_{k-1} \Vert}{\Vert F(u_{k-1})-F(\bar{u}_{k-1}) \Vert}\right\}.
    \end{equation}
    \State{Compute next iterate:}
    \begin{align*}
      u_k &= \bar{u}_k - a_k F(\bar{u}_k) \\
      \bar{u}_{k+1} &= \bar{u}_k - a_k \gamma  F(u_k).
    \end{align*}
    \EndFor{}
  \end{algorithmic}
\end{algorithm}
 Clearly, $a_k$ is monotonically decreasing by construction. Moreover, it is bounded away from zero by the simple observation that $a_k \ge \min\{a_0, \tau / L \} > 0$. The sequence therefore converges to a positive number which we denote by $a_\infty := \lim_k a_k$.

\newtheorem*{thm:egpa}{Theorem \ref{thm:egpa}}
\begin{theorem}\label{thm:egpa}
  Let $F:\R^d \to \R^d$ be $L$-Lipschitz that satisfies \Cref{ass:weak-minty}, where $u^*$ denotes any weak Minty solution, with $a_\infty > 2\rho$ and let ${(u_k)}_{k\ge0}$ be the iterates generated by \Cref{egpa} with $\gamma=\frac12$ and $\tau\in (0, 1)$. Then, there exists a $k_0 \in \N$, such that
  \begin{equation*}
    \min_{i=k_0, \dots, k} \Vert F(u_k) \Vert^2 \le \frac{1}{k-k_0}\frac{L}{\tau(\frac{a_\infty}{2}-\rho)} \Vert \bar{u}_{k_0}-u^* \Vert^2.
  \end{equation*}
\end{theorem}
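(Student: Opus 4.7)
The plan is to carry out a one-step Lyapunov analysis on $\|\bar{u}_k - u^*\|^2$, mirroring the constant step-size analysis of EG$+$ from \cite{diakonikolas2021efficient,pethick2022escaping} but tracking the extra error that adaptivity introduces through the ratio $a_k/a_{k+1}$. The key structural observation is that since $(a_k)$ is monotone decreasing and bounded below (the Lipschitz assumption gives $a_k\ge \min\{a_0,\tau/L\}$), it converges to $a_\infty>0$, so $a_k/a_{k+1}\to 1$. Thus asymptotically the adaptive rule enforces $\|F(u_k)-F(\bar u_k)\|\le \tau\|F(\bar u_k)\|$, i.e.\ the same control a constant step of $\tau/L$ would provide, and beyond some finite index $k_0$ the analysis reduces to the constant step-size case.

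First I would expand $\|\bar{u}_{k+1}-u^*\|^2$ from $\bar{u}_{k+1}=\bar{u}_k-\tfrac{a_k}{2}F(u_k)$ and invoke \cref{ass:weak-minty} at $u_k$ by splitting $\bar u_k - u^* = (u_k-u^*)+a_k F(\bar u_k)$ via the definition of $u_k$. Expanding the resulting cross term $-a_k^2\langle F(u_k),F(\bar u_k)\rangle$ through the polarization identity to separate $\|F(u_k)\|^2$, $\|F(\bar u_k)\|^2$, and $\|F(u_k)-F(\bar u_k)\|^2$ contributions yields the preliminary descent
\begin{equation*}
\|\bar{u}_{k+1}-u^*\|^2\le \|\bar{u}_k-u^*\|^2 - \tfrac{a_k}{2}\bigl(\tfrac{a_k}{2}-\rho\bigr)\|F(u_k)\|^2 - \tfrac{a_k^2}{2}\|F(\bar u_k)\|^2 + \tfrac{a_k^2}{2}\|F(u_k)-F(\bar u_k)\|^2.
\end{equation*}

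The second step is to absorb the last two terms. Since $\|u_k-\bar u_k\|=a_k\|F(\bar u_k)\|$, the adaptive rule~\eqref{eq:adap-step size-eg} applied at index $k+1$ gives $\|F(u_k)-F(\bar u_k)\|\le (\tau a_k/a_{k+1})\|F(\bar u_k)\|$. Plugging in, these two terms collapse to $\tfrac{a_k^2}{2}\bigl(\tau^2 a_k^2/a_{k+1}^2-1\bigr)\|F(\bar u_k)\|^2$, which becomes nonpositive for all $k$ beyond a finite $k_0$ because $\tau<1$ and $a_k/a_{k+1}\to 1$. Simultaneously $a_k\ge a_\infty>2\rho$ gives $\tfrac{a_k}{2}-\rho\ge \tfrac{a_\infty}{2}-\rho>0$, so for $k\ge k_0$ the inequality reduces to a clean one-step decrease by $\tfrac{a_\infty}{2}\bigl(\tfrac{a_\infty}{2}-\rho\bigr)\|F(u_k)\|^2$. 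Telescoping from $k_0$ to $k$, taking the minimum on the left, and using the floor $a_\infty\ge \tau/L$ then reproduces the stated bound up to an absolute constant.

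The main obstacle is the adaptive handling of the $\|F(\bar u_k)\|^2$ term: in the constant step-size analysis one uses it, with a strictly negative coefficient, to damp the discrepancy $\|F(u_k)-F(\bar u_k)\|^2$, but under an adaptive rule this cancellation is only guaranteed once the ratio $a_k/a_{k+1}$ is sufficiently close to $1$. This is precisely why the statement starts at a finite index $k_0$ rather than at $0$, and why the step-size restriction appears in the asymptotic form $a_\infty>2\rho$ rather than the direct $a>2\rho$ available in the non-adaptive case.
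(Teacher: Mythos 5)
Your proof is correct and reaches the stated rate (up to an absolute constant), but it realizes the descent inequality by a genuinely different algebraic route than the paper's \Cref{lem:adap-eg}. Both arguments are Lyapunov analyses on $\|\bar{u}_k-u^*\|^2$ that exploit the monotonicity of $a_k$ to get $a_k/a_{k+1}\to 1$ and thereby drop the adaptivity error past a finite $k_0$; the difference is in how each obtains the one-step descent. The paper multiplies the weak-Minty inequality by $a_k$, splits the inner product $\langle a_k F(u_k), u_k-u^*\rangle$ into three pieces involving $\bar{u}_{k+1}$, $u_k$, $\bar{u}_k$, and then applies the three-point identity to each piece; the Lipschitz/step-size control enters through a Cauchy--Schwarz bound on $\langle F(u_k)-F(\bar{u}_k), u_k-\bar{u}_{k+1}\rangle$, which leaves both $\|u_k-\bar{u}_k\|^2$ and $\|u_k-\bar{u}_{k+1}\|^2$ as damping terms. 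You instead expand $\|\bar{u}_{k+1}-u^*\|^2$ directly, use $\bar{u}_k-u^*=(u_k-u^*)+a_k F(\bar{u}_k)$, and polarize the cross term $-a_k^2\langle F(u_k),F(\bar{u}_k)\rangle$, so the only quantity that must be controlled by the step-size rule is $\|F(u_k)-F(\bar{u}_k)\|^2$, which is absorbed against $\|F(\bar{u}_k)\|^2=\|u_k-\bar{u}_k\|^2/a_k^2$ alone. Your version is a bit more elementary (no three-point identity, no Cauchy--Schwarz/Young step), and it directly yields the $\|F(u_k)\|^2$-coefficient $\tfrac{a_k}{2}(\tfrac{a_k}{2}-\rho)$, which is consistent with the theorem's hypothesis $a_\infty>2\rho$; the paper's \Cref{lem:adap-eg} as stated produces (for $\gamma=1/2$) a coefficient proportional to $\tfrac{a_k}{4}-\rho$, which would require the stronger condition $a_\infty>4\rho$ and does not match the constant the paper then uses when telescoping in the proof of \Cref{thm:egpa} — so your route is in fact cleaner and avoids that internal inconsistency, at the mild price of an absolute constant (a factor of $2$) in the final bound, which you correctly flag.
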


\Cref{egpa} presented above provides several benefits, but also some drawbacks. The main advantage resides in the fact that the Lipschitz constant of the operator $F$ does not need to be known. Moreover, the step size choice presented in \Cref{eq:adap-step size-eg} might allow us to take steps much larger than what would be suggested by a global Lipschitz constant if the iterates never --- or only during later iterations --- visits the region of high curvature (large local $L$). In such cases these larger step sizes come with the additional advantage that they allow us to solve a richer class of problems as we are able to relax the condition $\rho < \frac{1}{4L}$ in the case of EG$+$ to $\rho < a_\infty/2$ where $a_\infty = \lim_k a_k \ge \tau/L$.

On the other hand, we face the problem that the bounds in \Cref{thm:egpa} only hold after an unknown number of initial iterations when $a_k / a_{k+1} \le \frac{1}{\tau}$ is finally satisfied. In theory this might take long if the curvature around the solution is much higher than in the starting area as this will force the need to decrease the step size very late into the solution process resulting in the quotient $a_k / a_{k+1}$ being too large. This drawback could be mitigated by choosing $\tau$ smaller.
However, this will result in poor performance due to small step sizes. Even for
monotone problems where this type of step size has been proposed this problem
could not be circumvented and authors instead focused on convergence of the
iterates without any rate.

\section{Numerical experiments}%

In the following we compare EG$+$ method from~\cite{diakonikolas2021efficient} with the two methods we propose OGDA$+$ and EG$+$ with adaptive step size, see \Cref{ogda} and \Cref{egpa} respectively. Last but not least we also include the CurvatureEG$+$ method from~\cite{pethick2022escaping} which is a modification of EG$+$ and adaptively chooses the ratio of extrapolation and update step. In addition a backtracking linesearch is performed with an initial guess made by second order information, whose extra cost we ignore in the experiments.

\subsection{Von Neumann's ratio game}%
\label{sub:ratio}

\begin{figure*}
  \centering
  \begin{subfigure}[b]{0.4\linewidth}
    \centering
    \includegraphics[width=\linewidth]{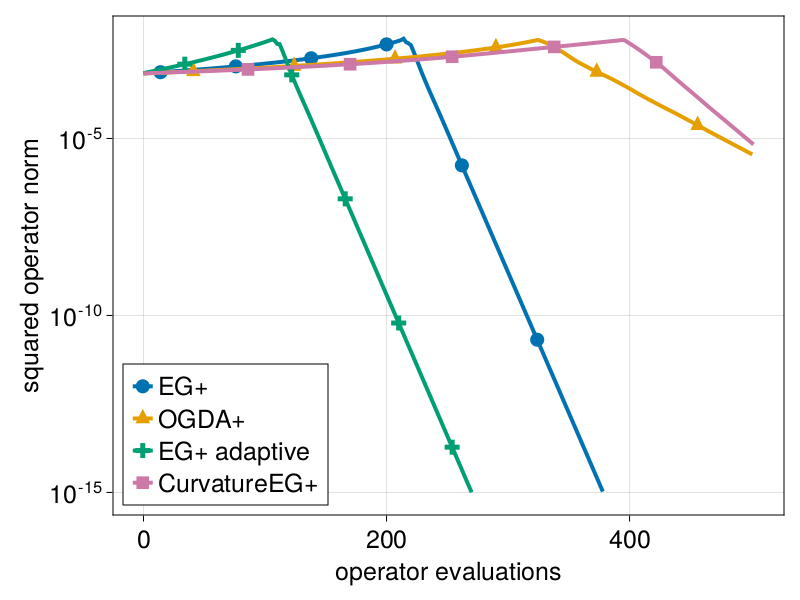}
    \caption{Performance of methods.}\label{fig:exotic-performance}
  \end{subfigure}
  \hspace{.5cm}
  \begin{subfigure}[b]{0.4\linewidth}
    \centering
    \includegraphics[width=\linewidth]{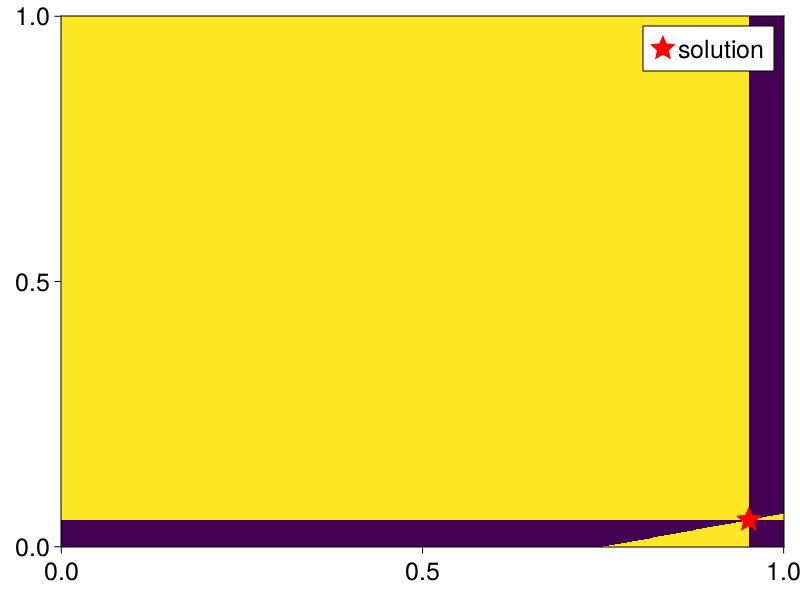}
    \caption{Sign of $\langle F(u), u-u^* \rangle$.}\label{fig:exotic-sign}
  \end{subfigure}
  \caption{\textbf{Ratio game}. A particularly difficult parametrization of~\eqref{eq:ratio-game} suggested in~\cite{daskalakis2020independent}.
    Right: The sign of $\langle F(u), u-u^* \rangle$ parametrized as $u:=(x,1-x, y, 1-y)$, with yellow representing a negative sign and purple a positive one, highlighting the fact that the solution is not Minty. Nevertheless, all methods are able to converge.}%
  \label{fig:ratio-game-exotic}
\end{figure*}

We consider von Neumann's \emph{ratio game}~\cite{neumann-ratio-game} recently explored in~\cite{daskalakis2020independent,diakonikolas2021efficient}. It is given by
\begin{equation}
  \label{eq:ratio-game}
  \min_{x\in \Delta^m} \, \max_{y\in \Delta^n} \, V(x, y) = \frac{\langle x, Ry \rangle}{\langle x, Sy \rangle},
\end{equation}
where $R \in \R^{m\times n}$ and $S \in \R_+^{m\times n}$ with $\langle x, S y \rangle > 0$ for all $x\in \Delta^m, y \in \Delta^n$, with $\Delta^d:=\{z \in \R^d : z_i\ge 0, \sum_{i=1}^{d}z_i =1\}$ denoting the unit simplex. Expression~\eqref{eq:ratio-game} can be interpreted as the value $V(\pi_x, \pi_y)$ for a stochastic game with a single state and mixed strategies.

In \Cref{fig:exotic-sign} we see an illustration of a particularly difficult instance of~\eqref{eq:ratio-game} discussed in~\cite{daskalakis2020independent}, highlighting the fact that the Stampacchia solution is not a Minty solution, even when restricted to arbitrarily close ball around it (yellow area touching the solution).
Interestingly we still observe good convergence behavior, although an estimated $\rho$ is more than ten times larger than the estimated Lipschitz constant.

\subsection{Forsaken}%
\label{sub:}

A particularly difficult min-max toy example with ``Forsaken'' solution was proposed in Example 5.2 of~\cite{hsieh2021limits}, and is given by
\begin{equation}
  \label{eq:forsaken}
  \min_{x\in \R} \, \max_{y\in \R} \, x(y-0.45) + \varphi(x) - \varphi(y),
\end{equation}
where $\varphi(z) = \frac{1}{4}z^2 - \frac12 z^4 + \frac16 z^6$. This problem exhibits a Stampacchia solution at $(x^*,y^*)\approx (0.08, 0.4)$, but also \emph{two} limit cycles not containing any critical point of the objective function. In addition,~\cite{hsieh2021limits} also observed that the limit cycle closer to the solution repels possible trajectories of iterates, thus ``shielding'' the solution. Later,~\cite{pethick2022escaping} noticed that, restricted to the box $\Vert (x,y) \Vert_\infty < \frac32$ the above mentioned solution is weak Minty with $\rho \ge 2 \cdot 0.477761$, which is much larger than $\frac1L \approx 0.08$. In line with these observations we can see in \Cref{fig:forsaken-hard} that none of the fixed step size methods with step size bounded by $\frac1L$ converge.
In light of this observation~\cite{pethick2022escaping} proposed a backtracking linesearch which potentially allows for larger steps than predicted by the global Lipschitz constant. Similarly, our proposed adaptive step size version of EG$+$, see \Cref{egpa}, is also able to break through the repelling limit cycle and converge the solution. On top of this, it does so at a faster rate and without the need of additional computations in the backtracking procedure.

\begin{figure*}[!h]
  \centering
  \begin{subfigure}[b]{0.4\linewidth}
    \centering
    \includegraphics[width=\linewidth]{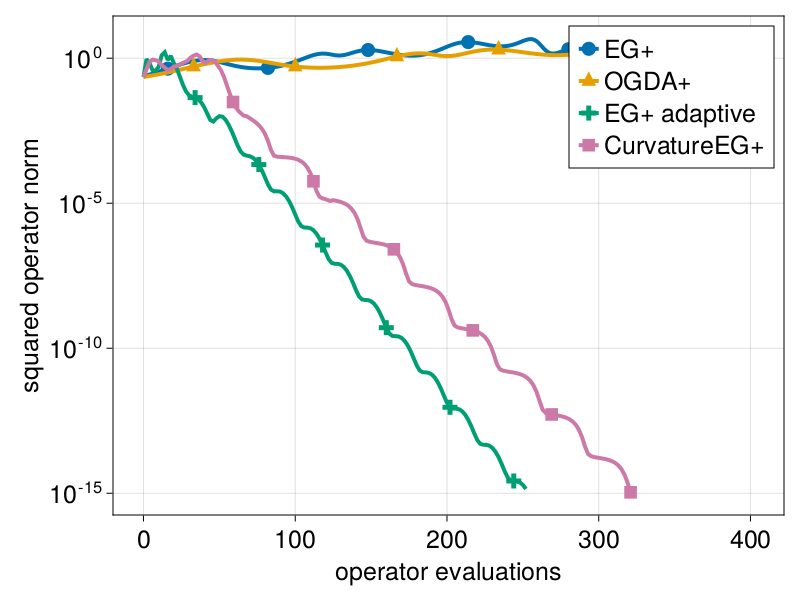}
    \caption{Performance of methods.}
  \end{subfigure}
  \hspace{.5cm}
  \begin{subfigure}[b]{0.4\linewidth}
    \centering
    \includegraphics[width=\linewidth]{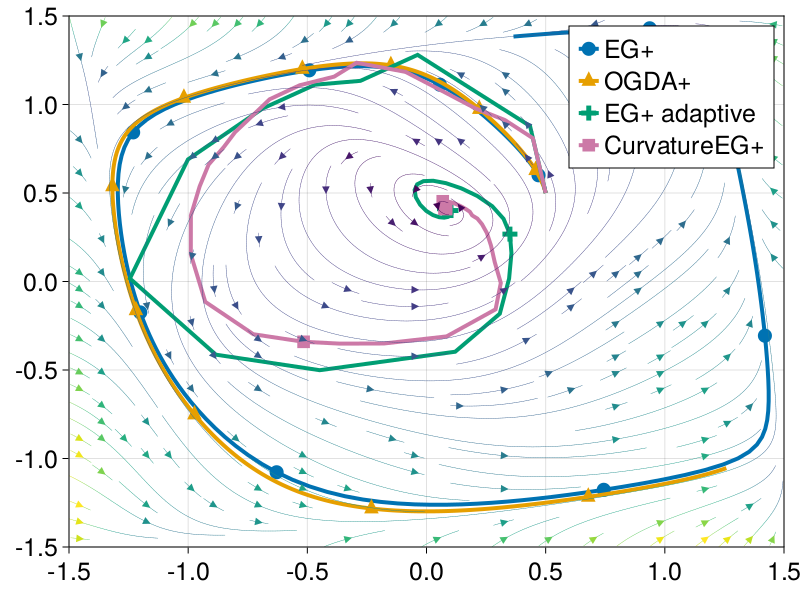}
    \caption{Trajectories of iterates.}
  \end{subfigure}
  \caption{\textbf{Forsaken.} An illustration of different methods on problem~\eqref{eq:forsaken}, originally suggested by~\cite{hsieh2021limits}.
    Only \Cref{egpa} and CurvatureEG$+$ are able to choose a step size large enough to withstand the repellent limit cycle.}%
  \label{fig:forsaken-hard}
\end{figure*}

\subsection{Lower bound example}%

The following min-max problem was introduced in~\cite{pethick2022escaping} as a lower bound on the dependence between $\rho$ and $L$ for EG$+$:
\begin{equation}
  \label{eq:lower-bound}
  \min_{x\in \R} \, \max_{y\in \R} \, \xi x y + \frac{\zeta}{2}(x^2-y^2).
\end{equation}
In particular Theorem 3.4 from~\cite{pethick2022escaping} states that EG$+$ (with any $\gamma$) and constant step size $a=\frac{1}{L}$ converges for this problem if and only if $(0,0)$ is a weak Minty solution with $\rho < \frac{1-\gamma}{L}$, where $\rho$ and $L$ can be computed explicitly in the above example and are given by
\begin{equation*}
  \label{eq:lower-bound-constants}
  L = \sqrt{\xi^2+\zeta^2} \quad \text{and} \quad \rho = -2\frac{\zeta}{\xi^2+\zeta^2}.
\end{equation*}
\Cref{fig:lower-bound} is obtained by choosing $\xi = \sqrt{3}$ and $\zeta = -1$ we get exactly $\rho = \frac{1}{L}$ and the theory therefore predicting divergence of EG$+$ for any $\gamma$, which is exactly what is empirically observed. Although, the general upper bound proved in \Cref{thm:ogda} only states convergence in the case $\rho < \frac{1}{L}$, we observe rapid convergence of OGDA$+$ for this example showcasing that it can drastically outperform EG$+$ in some scenarios.

\section{Conclusion}%
\label{sec:}

Many interesting questions remain in the realm of min-max problems --- especially when leaving the convex-concave setting. Very recently~\cite{gorbunov2022convergence} showed that the $\mathcal{O}(1/k)$ bounds on the squared operator norm for EG and OGDA for the \textbf{last iterate} (and not just the best one) hold even in the negative comonotone setting.
Deriving a similar statement in the presence of merely weak Minty solutions is an open question.

Overall, our analysis and experiments seem to provide evidence that there is little advantage of using OGDA$+$ over EG$+$ for most problems as the lower iteration cost is offset by the smaller step size. One exception is given by problem \cref{eq:lower-bound} displayed in \Cref{fig:lower-bound}, which is not covered by theory and OGDA$+$ is the only method able to converge.

Lastly, we observe that the previous paradigm in pure minimization of ``smaller step size ensures convergence'' but ``larger step size gets there faster'', where the latter is typically constrained by the reciprocal of the gradients Lipschitz constant, does not seem to hold true for min-max problems anymore. The analysis of different methods in the presence of weak Minty solutions shows that convergence can be lost if the step size is too small and sometimes needs to be larger than $\frac1L$, which one can typically only hope for in adaptive methods. Our EG$+$ method with adaptive step size achieves this even without the additional cost of a backtracking linesearch as used for the CurvatureEG$+$ method of~\cite{pethick2022escaping}.

\appendix

\section{Omitted proofs}%
\label{sec:omitted}

\subsection{OGDA$+$}%

For convenience we will sometimes use the notation $g_k$ for $F(u_k)$ for all $k \ge -1$.
\begin{lemma}%
  \label{lem:ogda-general}
  Let $(u_k)$ be the sequence of iterates generated by \Cref{ogda}, then
  \begin{equation}
    \begin{aligned}
      \Vert u_{k+1}+ a g_k-u^* \Vert^2 &\le \Vert u_k + a g_{k-1} -u^* \Vert^2 + a \gamma \rho \Vert g_k \Vert^2
      + 4 a\gamma^{-1} \langle g_k-g_{k-1}, u_k-u_{k+1} \rangle \\
    & \quad- (2 \gamma^{-1} -1) \Vert u_{k+1}-u_k \Vert^2
    - a^2(1+2\gamma^{-1}) \Vert g_k-g_{k-1} \Vert^2.
    \end{aligned}
  \end{equation}
\end{lemma}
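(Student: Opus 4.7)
The plan is to track the Lyapunov-like quantity $\Vert u_k + a g_{k-1} - u^*\Vert^2$. The choice is natural because the OGDA$+$ update can be regrouped into the compact one-step recursion
\begin{equation*}
u_{k+1} + a g_k - u^* = (u_k + a g_{k-1} - u^*) - a\gamma g_k.
\end{equation*}
Squaring and invoking \Cref{ass:weak-minty} to bound $-2a\gamma\langle g_k, u_k - u^*\rangle \le a\gamma\rho \Vert g_k\Vert^2$ immediately yields the preliminary estimate
\begin{equation*}
\Vert u_{k+1} + a g_k - u^*\Vert^2 \le \Vert u_k + a g_{k-1} - u^*\Vert^2 + a\gamma\rho \Vert g_k\Vert^2 + T_k,
\end{equation*}
where $T_k := a^2\gamma^2 \Vert g_k\Vert^2 - 2a^2\gamma \langle g_k, g_{k-1}\rangle$ collects the rest. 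Matching the claim then reduces to rewriting $T_k$ in the three-term basis $\bigl(\Vert u_{k+1}-u_k\Vert^2,\ \langle g_k - g_{k-1}, u_k - u_{k+1}\rangle,\ \Vert g_k - g_{k-1}\Vert^2\bigr)$, with no residual $\Vert g_k\Vert^2$.

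The only algebraic tool needed is the identity $a\gamma g_k = (u_k - u_{k+1}) - a(g_k - g_{k-1})$, itself obtained by rearranging the update. Its squared form expresses $a^2\gamma^2 \Vert g_k\Vert^2$ directly in the target basis. To eliminate the inner product I will first split $\langle g_k, g_{k-1}\rangle = \Vert g_k\Vert^2 - \langle g_k, g_k - g_{k-1}\rangle$, which turns $T_k$ into $a^2\gamma(\gamma-2)\Vert g_k\Vert^2 + 2a^2\gamma \langle g_k, g_k - g_{k-1}\rangle$. The linear form of the identity then recasts $2a^2\gamma \langle g_k, g_k - g_{k-1}\rangle = 2a\langle a\gamma g_k, g_k - g_{k-1}\rangle$ in the target basis, while a second use of the squared form, via $a^2\gamma(\gamma-2)\Vert g_k\Vert^2 = -(2\gamma^{-1}-1)\, a^2\gamma^2 \Vert g_k\Vert^2$, removes the last $\Vert g_k\Vert^2$ contribution.

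Collecting coefficients produces exactly the three numbers appearing in the statement: the cross term picks up $2a(2\gamma^{-1}-1) + 2a = 4a\gamma^{-1}$, the $\Vert u_{k+1}-u_k\Vert^2$ term picks up $-(2\gamma^{-1}-1)$, and the $\Vert g_k - g_{k-1}\Vert^2$ term picks up $-a^2(2\gamma^{-1}-1) - 2a^2 = -a^2(1 + 2\gamma^{-1})$. I expect the main obstacle to be purely organizational --- performing the substitutions in the right order so that every $\Vert g_k\Vert^2$ really cancels --- rather than any conceptual difficulty. Notably, no assumption on $a$, $\gamma$, or the Lipschitz constant $L$ enters this lemma; those conditions come in only afterwards, when the estimate is summed and telescoped in the proof of \Cref{thm:ogda}.
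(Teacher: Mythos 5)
Your proof is correct and is essentially the paper's argument: both rewrite the update as $u_{k+1}+ag_k-u^* = (u_k+ag_{k-1}-u^*)-a\gamma g_k$, square, apply the weak Minty bound to the $\langle g_k, u_k-u^*\rangle$ cross term, and then use the relation $a\gamma g_k = (u_k-u_{k+1})-a(g_k-g_{k-1})$ (equivalently, $u_k-u_{k+1}=a(g_k-g_{k-1})+a\gamma g_k$) to convert the residual $T_k$ into the target three-term basis. The only difference is organizational: you derive the identity~\eqref{eq:crazy} constructively from $T_k$, whereas the paper states it and verifies it by expanding the right-hand side; the substitutions and the resulting coefficients are identical.
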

\begin{proof}[Proof]
  From the update of the method we deduce for all $k\ge0$
  \begin{equation}
    \label{eq:new-first}
    \begin{aligned}
      \Vert u_{k+1}+ a g_k-u^* \Vert^2 &= \Vert u_k - a\gamma g_k + a g_{k-1} - u^* \Vert^2 \\
      &= \Vert u_k + a g_{k-1} -u^* \Vert^2 - 2 \langle u_k - u^*, a \gamma g_k \rangle - \langle 2 a g_{k-1}- a \gamma g_k, a \gamma g_k \rangle \\
      &\le \Vert u_k + a g_{k-1} -u^* \Vert^2 + a \gamma \rho \Vert g_k \Vert^2 - \langle 2 a g_{k-1}- a \gamma g_k, a \gamma g_k \rangle ,
    \end{aligned}
  \end{equation}
  where we used the weak Minty assumption to deduce the last inequality.
  It remains to derive the following equality
  \begin{equation}
    \label{eq:crazy}
    \begin{split}
    - \langle 2 a g_{k-1}- a \gamma g_k, a \gamma g_k \rangle = 4 a\gamma^{-1} \langle g_k-g_{k-1}, u_k-u_{k+1} \rangle
    - (2\gamma^{-1} -1) \Vert u_{k+1}-u_k \Vert^2 \\
    - a^2(1+2\gamma^{-1}) \Vert g_k-g_{k-1} \Vert^2.
    \end{split}
  \end{equation}
  This can be seen by expressing every difference of iterates in terms of gradients according to \Cref{ogda}, giving
  \begin{equation*}
    \begin{aligned}
      4 a\gamma^{-1} \langle g_k-g_{k-1}, u_k-u_{k+1} \rangle &= 4 a\gamma^{-1} \langle g_k - g_{k-1}, a (g_k -g_{k-1}) + a \gamma g_k \rangle \\
      &= 4  a^2\gamma^{-1} \Vert g_k-g_{k-1} \Vert^2 + 4 a^2 \langle g_k, g_k-g_{k-1} \rangle
    \end{aligned}
  \end{equation*}
  and
  \begin{equation*}
    \begin{aligned}
      \MoveEqLeft - (2\gamma^{-1}  - 1) \Vert u_{k+1}-u_k \Vert^2 \\
      &= (1 - 2\gamma^{-1}) \Vert a (g_k - g_{k-1} ) + a \gamma g_k  \Vert^2 \\
    &= (a^2 - 2 a^2\gamma^{-1}) \Vert g_k-g_{k-1} \Vert^2 + (2 \gamma a^2 - 4 a^2) \langle g_k, g_k - g_{k-1}\rangle + (a^2 \gamma^2- 2 a^2 \gamma) \Vert g_k \Vert^2 \\
    &= (a^2 - 2 a^2\gamma^{-1}) \Vert g_k-g_{k-1} \Vert^2 +  \langle g_k, (a^2 \gamma^2 - 4 a^2)g_k - (2 \gamma a^2 - 4 a^2)g_{k-1} \rangle.
    \end{aligned}
  \end{equation*}
  Adding the previous two equalities to $- a^2(1+2\gamma^{-1} ) \Vert g_k-g_{k-1} \Vert^2$ proves~\eqref{eq:crazy}. Combining \Cref{eq:new-first,eq:crazy} proves the desired statement.
\end{proof}

We are actually going to show a slightly more general version of \Cref{thm:ogda}, which introduces an additional parameter $\lambda$. Note that for $\lambda= \gamma^{-1}$ we recover the statement of \Cref{thm:ogda}. This allows us to cover the analysis of the monotone case in one proof. In particular $\lambda$ close to zero will yield the statement of \Cref{thm:ogda-monotone}.

\begin{theorem}\label{thm:generalized}
  Let $F:\R^d \to \R^d$ be $L$-Lipschitz continuous satisfying \cref{ass:weak-minty}, where $u^*$ denotes any weak Minty solution, with $a \lambda \gamma > \rho$ for some $0\le  \lambda \le 2 \gamma^{-1}$ and let ${(u_k)}_{k\ge0}$ be the sequence of iterates generated by \Cref{ogda} with
  \begin{equation}
    \label{eq:ogda-step-size-general}
    a L \le \frac{2- \lambda\gamma - \gamma}{2- \lambda \gamma +\gamma}.
  \end{equation}
  Then, for all $k\ge0$
  \begin{equation*}
      \frac{1}{k}\sum_{i=0}^{k-1}\Vert F(u_i) \Vert^2 \le \frac{1}{k a \gamma (a \lambda \gamma-\rho)} \Vert u_0 + a F(u_{0}) -u^* \Vert^2.
  \end{equation*}
  In particular as long as $\rho < \frac{1}{L}$ we can find a small enough $\gamma$ such that the above bound holds.
\end{theorem}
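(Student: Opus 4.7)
The plan is to extend the proof pattern of \Cref{thm:ogda} by keeping $\lambda$ as a free parameter so the same argument covers both the weak-Minty regime ($\lambda=\gamma^{-1}$, recovering \Cref{thm:ogda}) and the monotone one ($\lambda$ close to $0$, which will feed into \Cref{thm:ogda-monotone}). The backbone is the one-step inequality of \Cref{lem:ogda-general}; the task is to massage its error terms into a Lyapunov recursion
\begin{equation*}
  \Phi_{k+1} + a\gamma(a\lambda\gamma - \rho)\Vert F(u_k)\Vert^2 \le \Phi_k,
\end{equation*}
with $\Phi_k := \Vert u_k + a F(u_{k-1}) - u^*\Vert^2 + C\Vert u_k - u_{k-1}\Vert^2$ for a nonnegative weight $C$ to be fixed. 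Since the initialisation sets $u_{-1}=u_0$, we have $\Phi_0 = \Vert u_0 + aF(u_0) - u^*\Vert^2$, matching the right-hand side of the claim. Telescoping and using $\Phi_k\ge 0$ then produces an average-of-squared-norms bound that dominates the minimum-iterate statement.

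First I would bound the cross term $4a\gamma^{-1}\langle g_k - g_{k-1}, u_k - u_{k+1}\rangle$ by Young's inequality with a parameter tied to $\lambda\gamma$; the correct split leaves a coefficient $-(2\gamma^{-1}-1-\lambda\gamma)$ on $\Vert u_{k+1}-u_k\Vert^2$ (explaining the feasibility bound $\lambda \le 2\gamma^{-1}$) together with a positive multiple of $\Vert g_k - g_{k-1}\Vert^2$. Next I apply $L$-Lipschitz continuity, $\Vert g_k - g_{k-1}\Vert^2 \le L^2\Vert u_k - u_{k-1}\Vert^2$, so that this leftover is converted into a multiple of $\Vert u_k - u_{k-1}\Vert^2$, which is exactly the quantity absorbed by $C\Vert u_k - u_{k-1}\Vert^2$ and then telescopes against $C\Vert u_{k+1}-u_k\Vert^2$.

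To uncover the required negative $\Vert F(u_k)\Vert^2$ contribution I would expand a portion of $\Vert u_{k+1}-u_k\Vert^2$ through the update formula $u_{k+1}-u_k = -a\gamma g_k - a(g_k - g_{k-1})$, which gives
\begin{equation*}
  \Vert u_{k+1}-u_k\Vert^2 = a^2\gamma^2\Vert g_k\Vert^2 + 2a^2\gamma\langle g_k, g_k - g_{k-1}\rangle + a^2\Vert g_k - g_{k-1}\Vert^2.
\end{equation*}
The $a^2\gamma^2\Vert g_k\Vert^2$ summand is what dominates the weak-Minty cost $a\gamma\rho\Vert g_k\Vert^2$ and produces the gap $a\gamma(a\lambda\gamma - \rho)\Vert g_k\Vert^2$, once the additional cross term $\langle g_k, g_k - g_{k-1}\rangle$ is handled by a secondary Young's step whose $\Vert g_k\Vert^2$ contribution is absorbed into the $a\lambda\gamma$ factor and whose $\Vert g_k - g_{k-1}\Vert^2$ contribution is folded back into the Lipschitz-based pool. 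The matching of coefficients pins $C$ down uniquely and forces exactly the step-size condition $aL \le (2-\gamma-\lambda\gamma)/(2+\gamma-\lambda\gamma)$.

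The main obstacle is the coupled bookkeeping: several Young's inequalities are invoked in sequence with parameters tied to each other through $\lambda$ and $\gamma$, and the potential $\Phi_k$ only telescopes cleanly when the Lipschitz-inflated $\Vert u_k-u_{k-1}\Vert^2$ weight exactly matches the residual $\Vert u_{k+1}-u_k\Vert^2$ slack left over from the cross-term step; this matching is what the step-size bound encodes. The concluding ``in particular'' assertion then follows at $\lambda = \gamma^{-1}$ (so $a\lambda\gamma = a$) by taking $\gamma\to 0^+$: the bound $aL \le (1-\gamma)/(1+\gamma)$ approaches $aL \le 1$, so the feasibility $a > \rho$ can be arranged whenever $\rho L < 1$.
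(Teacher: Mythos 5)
Your plan is essentially the paper's own proof: start from \Cref{lem:ogda-general}, inject a $\lambda$-weighted version of the squared-gradient identity coming from the update rule $u_{k+1}-u_k=-a\gamma g_k - a(g_k-g_{k-1})$, absorb the cross terms via Young's inequality, convert $\Vert g_k-g_{k-1}\Vert^2$ to $\Vert u_k-u_{k-1}\Vert^2$ by Lipschitz continuity, and telescope; matching the leftover coefficients gives exactly \eqref{eq:ogda-step-size-general}. One small slip: you write that Young's inequality leaves a coefficient $-(2\gamma^{-1}-1-\lambda\gamma)$ on $\Vert u_{k+1}-u_k\Vert^2$ and attribute the restriction $\lambda\le 2\gamma^{-1}$ to its sign, whereas in the paper's bookkeeping the coefficient before the Young step is $-(2\gamma^{-1}-1-\lambda)$, and the constraint $\lambda\le 2\gamma^{-1}$ is there to keep the Young weight $2\gamma^{-1}-\lambda$ nonnegative; also, the paper consolidates the two cross terms $\langle g_k-g_{k-1}, u_k-u_{k+1}\rangle$ \emph{before} applying a single Young's inequality, which avoids the secondary Young step you invoke on $\langle g_k, g_k-g_{k-1}\rangle$. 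These are cosmetic and do not change the substance; the argument goes through as you outline.
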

\begin{proof}
  Using the definition $u_{k+1}$ we can express
  \begin{equation}
    a \gamma g_k  = u_k- u_{k+1} - a \lambda (g_k - g_{k-1}).
  \end{equation}
  By applying norms on both sides we obtain via expansion of squares
  \begin{equation}
    \label{eq:new-squared-gradient}
    a^2 \lambda \gamma^2 \Vert g_k \Vert^2 = \lambda\Vert u_{k+1}-u_k  \Vert^2 + a^2 \lambda\Vert  g_k - g_{k-1}\Vert^2 + 2 a \lambda \langle u_{k+1}-u_k, g_k -g_{k-1} \rangle.
  \end{equation}
  Adding \Cref{eq:new-squared-gradient,lem:ogda-general} we deduce
  \begin{multline}
    \label{eq:lala}
    \Vert u_{k+1}+ a g_k-u^* \Vert^2  +  a \gamma (a\lambda\gamma -\rho)\Vert g_k \Vert^2\le \Vert u_k + a  g_{k-1} -u^* \Vert^2 - a^2(1+2\gamma^{-1}-\lambda) \Vert g_k-g_{k-1} \Vert^2 \\
    - (2\gamma^{-1} -1 - \lambda) \Vert u_{k+1}-u_k \Vert^2 + (4 a \gamma^{-1}-2a \lambda )\langle u_k - u_{k+1}, g_k -g_{k-1} \rangle.
  \end{multline}
  Now, we get via Young's inequality
  \begin{equation*}
    (4 \gamma^{-1}a -2a \lambda) \langle u_k - u_{k+1}, g_k -g_{k-1} \rangle \le aL(2\gamma^{-1}-\lambda) \Vert u_k -u_{k+1} \Vert^2 + L^{-1}a(2\gamma^{-1}- \lambda) \Vert g_k - g_{k-1} \Vert^2.
  \end{equation*}
  Combining the previous two inequalities
  \begin{multline}
    \label{eq:lala2}
    \Vert u_{k+1}+ a g_k-u^* \Vert^2 + a \gamma (a\gamma\lambda - \rho)\Vert g_k \Vert^2 \le \Vert u_k + a g_{k-1} -u^* \Vert^2 \\
    + (aL^{-1} (2\gamma^{-1}-\lambda) - a^2 (1+2\gamma^{-1}- \lambda)) \Vert g_k-g_{k-1} \Vert^2
    - (2\gamma^{-1}-1-\lambda - La (2\gamma^{-1}-\lambda)) \Vert u_{k+1}-u_k \Vert^2.
  \end{multline}
  We now use the Lipschitz continuity\footnote{Strictly speaking we would have to assume that the term before $\Vert g_k-g_{k-1} \Vert^2$ is positive, but if it is not we can just discard it and be done with the proof.} of $F$ to deduce that
  \begin{multline*}
    \Vert u_{k+1}+ a g_k-u^* \Vert^2 + a \gamma (a\gamma\lambda -\rho)\Vert g_k \Vert^2 \le \Vert u_k + a g_{k-1} -u^* \Vert^2 \\
    + (aL (2\gamma^{-1}-\lambda) - a^2L^2 (1+2\gamma^{-1}- \lambda)) \Vert u_k-u_{k-1} \Vert^2
    - (2\gamma^{-1}-1-\lambda - La (2\gamma^{-1}-\lambda)) \Vert u_{k+1}-u_k \Vert^2.
  \end{multline*}
  The fact that the terms can be telescoped is, with $\alpha = L a$ equivalent to
  \begin{equation*}
    2 \gamma^{-1}-1-\lambda - (2 \alpha \gamma^{-1}- \alpha \lambda) \ge - \alpha^2(1+2 \gamma^{-1}) + \alpha^2 \lambda + (2 \alpha \gamma^{-1}- \alpha \lambda)
  \end{equation*}
  which can be simplified to
  \begin{equation*}
    \alpha^2 (2 + \gamma - \lambda \gamma)  - \alpha (4-2 \lambda \gamma) + 2 - \gamma -\lambda \gamma \ge 0
  \end{equation*}
  By solving for $\alpha$ we get the condition
  \begin{equation}
    \label{eq:condition-ogda}
    \alpha \le \frac{2- \lambda \gamma- \gamma}{2- \lambda \gamma + \gamma},
  \end{equation}
  where from the condition $2 \gamma^{-1}-1-\lambda - (2 \alpha \gamma^{-1}- \alpha \lambda)\ge 0$ we deduce $\alpha \le \frac{2-\lambda\gamma-\gamma}{2- \lambda \gamma}$, which is redundant in light of \cref{eq:condition-ogda}. The statement follows since we chose $u_0=u_{-1}$.
\end{proof}

\subsection{Improved bounds under monotonicity}%
\label{sub:}

For the readers convenience we restate the theorem from the main text.
\begin{thm:ogda-monotone}%
  Let $F: \R^d\to \R^d$ be monotone and $L$-Lipschitz. If $a L = \frac{2-\gamma}{2+\gamma} - \epsilon$ for $\epsilon > 0$
  then, the iterates generated by OGDA$+$ fulfill
  \begin{equation*}
      \frac{1}{k}\sum_{i=0}^{k-1}\Vert F(u_i) \Vert^2 \le \frac{2}{k a^2 \gamma^2 \epsilon} \Vert u_0 + a F(u_{0}) -u^* \Vert^2.
  \end{equation*}
    In particular, we can choose $\gamma=1$ and $a<\frac{1}{3L}$.
\end{thm:ogda-monotone}
\begin{proof}[Proof of \Cref{thm:ogda-monotone}]
  From \Cref{thm:generalized} and the fact that $aL = \frac{\gamma-2}{\gamma+2}-\epsilon$ we need to find an appropriate $\lambda>0$ such that
  $aL \le \frac{2-\lambda \gamma - \gamma}{2 - \lambda \gamma + \gamma}$. Given $\epsilon>0$ we therefore aim to find a $\lambda>0$ such that
  \begin{equation}
    \frac{2-\gamma}{2+\gamma} - \epsilon \overset{!}{\le} \frac{2-\lambda \gamma - \gamma}{2 - \lambda \gamma + \gamma}.
  \end{equation}
  By bringing both terms on one side and the same denominator we obtain the condition
  \begin{equation}
    \frac{2 \lambda \gamma^2}{(\gamma+2)(\gamma+2-\lambda \gamma)} \overset{!}{\le} \epsilon.
  \end{equation}
  Using the fact that $\lambda \le 2 \gamma^{-1}$ and $\gamma\ge0$ we can upper bound the left hand side by $\lambda$. Choosing $\lambda = \epsilon$ therefore ensures that the necessary condition on the step size~\eqref{eq:condition-ogda} is satisfied and at the same time yields the dependence on $\epsilon$ in the denominator of the right hand side in the statement of the theorem.

\end{proof}

\subsection{OGDA$+$ stochastic}%
For the stochastic analysis we use for convenience the notation $\Delta_{k+1} = \Exp{\Vert u_{k+1}-u_k \Vert^2}$ and $\tilde{\sigma}^2 = \Exp{\Vert \frac{1}{B} \sum_{j=1}^{B}\tilde{F}(u_k, \xi_j) -F(u_k) \Vert^2} $.
\begin{lemma}%
  \label{lem:ogda-stochastic-descent}
  Let $(u_k)$ be the sequence of iterates generated by stochastic OGDA$+$, then, for any $\lambda>0$
  \begin{equation*}
    L_{k+1} + a \gamma(a-\rho) \Exp{\Vert g_k \Vert^2} \le L_k + 2(1+\lambda) \tilde{C}_1 \tilde{\sigma} + (1+\lambda^{-1})\tilde{C}_1 \Delta_k
    - \tilde{C}_2 \Delta_{k+1} .
\end{equation*}
  where $\tilde{C}_1:= \max\{0, a L\gamma^{-1} - a^2L^2 (1+\gamma^{-1})\}$ and $\tilde{C}_2:= \gamma^{-1}-1-aL \gamma^{-1}$.
\end{lemma}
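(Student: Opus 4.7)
The plan is to mirror the deterministic argument of \Cref{lem:ogda-general}, but carrying the stochastic noise terms through. I would introduce the decomposition $\tilde g_k = F(u_k) + \epsilon_k$, where $\epsilon_k = \frac{1}{B}\sum_j \tilde F(u_k,\xi_j) - F(u_k)$ is zero-mean conditional on the past, has $\Exp{\Vert \epsilon_k \Vert^2}\le \tilde\sigma^2$, and is independent of $\epsilon_{k-1}$ because the mini-batches are drawn independently across iterations. Given the shape of the statement, the natural Lyapunov quantity is $L_k = \Exp{\Vert u_k + a \tilde g_{k-1} -u^* \Vert^2}$, in direct analogy with the deterministic case.

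First I would rewrite the update as $u_{k+1} + a \tilde g_k = u_k + a \tilde g_{k-1} - a \gamma \tilde g_k$, expand the squared norm, and then take the conditional expectation given the filtration up to iteration $k-1$. The cross term involving $\tilde g_k$ averages to $F(u_k)$ by unbiasedness, and the weak Minty inequality is applied at this stage to give $\Expcond{\langle u_k-u^*,\tilde g_k\rangle}{\mathcal F_{k-1}} \ge -\tfrac{\rho}{2}\Vert F(u_k)\Vert^2$, which produces the $a\gamma(a-\rho)\Exp{\Vert g_k\Vert^2}$ on the left-hand side (with $g_k = F(u_k)$ in the deterministic notation kept from the earlier lemma). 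The algebraic identity corresponding to \Cref{eq:crazy} goes through verbatim with $\tilde g_k,\tilde g_{k-1}$ in place of $g_k,g_{k-1}$, because it only uses the recurrence and not any stochastic property.

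Next, I would handle the cross term $\langle \tilde g_k - \tilde g_{k-1}, u_k - u_{k+1}\rangle$ by Young's inequality, choosing the split so that one side produces the coefficient $\tilde C_2 = \gamma^{-1}-1-aL\gamma^{-1}$ in front of $\Vert u_{k+1}-u_k\Vert^2$ and the other side leaves a multiple of $\Vert \tilde g_k - \tilde g_{k-1}\Vert^2$ with coefficient $\tilde C_1 = \max\{0, aL\gamma^{-1} - a^2L^2(1+\gamma^{-1})\}$ after invoking Lipschitz continuity of $F$ (taking the $\max$ with $0$ is the same trick as in the footnote of \Cref{thm:generalized}). To finally replace $\Vert \tilde g_k - \tilde g_{k-1}\Vert^2$ by something in $\Delta_k$ plus variance, I would use the decomposition $\tilde g_k - \tilde g_{k-1} = (F(u_k)-F(u_{k-1})) + (\epsilon_k - \epsilon_{k-1})$, apply a $(1+\lambda)/(1+\lambda^{-1})$ Young splitting, bound $\Vert F(u_k)-F(u_{k-1})\Vert^2 \le L^2\Vert u_k-u_{k-1}\Vert^2$ by Lipschitz continuity, and use independence and zero mean of $\epsilon_k,\epsilon_{k-1}$ so that $\Exp{\Vert \epsilon_k-\epsilon_{k-1}\Vert^2}\le 2\tilde\sigma^2$. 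This is precisely what produces the two summands $(1+\lambda^{-1})\tilde C_1 \Delta_k$ and $2(1+\lambda)\tilde C_1 \tilde\sigma^2$ in the claim (reading $\tilde\sigma$ as shorthand for the variance proxy defined just above the lemma).

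The main obstacle is purely bookkeeping rather than a new idea: one must thread the free Young parameter $\lambda$ consistently through the stochastic Lipschitz step so that the coefficients of $\Delta_k$ and of the variance term end up with exactly the factors $(1+\lambda^{-1})$ and $(1+\lambda)$ stated in the lemma, and one must be careful that the conditional-expectation argument used for the weak Minty step is compatible with the unconditional expectations appearing in $\Delta_{k+1}$, $L_{k+1}$ and the variance bound, which follows because all terms are ultimately integrated against the full history. Everything else is a direct stochastic lift of the deterministic computation in \Cref{lem:ogda-general}, telescoped later in the proof of \Cref{thm:ogda-stoch}.
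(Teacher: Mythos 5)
Your plan retraces the paper's own derivation almost step by step: expand the update, use the tower property so that the cross term involving $\tilde g_k$ reduces to the clean gradient and the weak Minty inequality can be applied, reuse the algebraic identity \cref{eq:crazy} with the noisy gradients, apply Young, and finally decompose $\tilde g_k-\tilde g_{k-1}$ by a $(1+\lambda)$/$(1+\lambda^{-1})$ splitting into a deterministic Lipschitz part and a noise part. The one place you genuinely depart is the bound on the noise difference: because $\epsilon_k$ is conditionally zero-mean given the past and $\epsilon_{k-1}$ is past-measurable, the cross term vanishes and $\Exp{\Vert\epsilon_k-\epsilon_{k-1}\Vert^2}\le 2\tilde\sigma^2$, whereas the paper instead applies the cruder $\Vert a+b\Vert^2\le 2\Vert a\Vert^2+2\Vert b\Vert^2$ and arrives at the larger constant $4(1+\lambda)\tilde\sigma^2$ in \cref{eq:est-grad-diff}. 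Your version actually matches the factor $2(1+\lambda)$ appearing in the lemma statement (reading $\tilde\sigma$ there as $\tilde\sigma^2$), so this refinement reconciles what is otherwise a mismatch between the lemma and the paper's own proof. Two small cautions. First, $\epsilon_k$ and $\epsilon_{k-1}$ are not independent, since $u_k$ depends on $\epsilon_{k-1}$; what you need and in fact use is only the martingale-difference (uncorrelated) property via the tower rule, so say that rather than ``independence''. Second, the one non-algebraic ingredient you leave implicit under ``mirror the deterministic argument'' is Jensen, $\Exp{\Vert F(u_k)\Vert^2}=\Exp{\Vert\Expcond{\tilde g_k}{u_k}\Vert^2}\le\Exp{\Vert\tilde g_k\Vert^2}$; without it the squared-gradient identity yields $a^2\gamma\Exp{\Vert\tilde g_k\Vert^2}$ on the left, not $a^2\gamma\Exp{\Vert g_k\Vert^2}$, and the weak Minty term alone only contributes the $-\rho$ part of the coefficient $a\gamma(a-\rho)$, so this step deserves to be spelled out.
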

\begin{proof}[Proof of \Cref{lem:ogda-stochastic-descent}]
  From the update of the method we deduce for all $k\ge0$
  \begin{equation}
    \label{eq:first-stoch}
    \begin{aligned}
      \Exp{\Vert u_{k+1}+ a \tilde{g}_k-u^* \Vert^2}  &= \Vert u_k - a \gamma \tilde{g}_k + a \tilde{g}_{k-1} - u^* \Vert^2 \\
      &= \Exp{\Vert u_k + a \tilde{g}_{k-1} -u^* \Vert^2- \langle 2a \tilde{g}_{k-1}-a \tilde{g}_k, a \gamma \tilde{g}_k \rangle}  - 2a \Exp{ \langle u_k - u^*, \gamma\tilde{g}_k \rangle } \\
      &\le \Exp{\Vert u_k + a \tilde{g}_{k-1} -u^* \Vert^2 + a \gamma \rho \Vert g_k \Vert^2 - \langle 2a \tilde{g}_{k-1}-a \gamma \tilde{g}_k, a \gamma \tilde{g}_k \rangle},
    \end{aligned}
  \end{equation}
  where we used
  \begin{equation*}
    2\Exp{ \langle u_k - u^*, \tilde{g}_k \rangle} = 2\Exp{\Expcond{\langle u_k - u^*, \tilde{g}_k \rangle}{u_k}} = 2\Exp{ \langle u_k - u^*, g_k \rangle } \ge - \rho \Exp{\Vert g_k \Vert^2}
  \end{equation*}
  to deduce the last inequality.
  It remains to note that
  \begin{equation}
    \label{eq:crazy-stoch}
    \begin{split}
    - \langle 2a \tilde{g}_{k-1}-a \gamma\tilde{g}_k, a \gamma \tilde{g}_k \rangle = 4 a \gamma^{-1} \langle \tilde{g}_k-\tilde{g}_{k-1}, u_k-u_{k+1} \rangle
    - (2 \gamma^{-1}-1) \Vert u_{k+1}-u_k \Vert^2 \\
    - a^2(1+2 \gamma^{-1}) \Vert \tilde{g}_k-\tilde{g}_{k-1} \Vert^2,
    \end{split}
  \end{equation}
  which follows immediately the way we deduced \cref{eq:crazy}.
  Now using the definition of $u_{k+1}$ we get
  \begin{equation}
    \label{eq:squared-gradient-stoch}
    \begin{split}
      a^2 \gamma \Exp{\Vert g_k \Vert^2} = a^2 \gamma \Exp{\Vert \Expcond{\tilde{g}_k}{u_k} \Vert^2} \le a^2 \gamma \Exp{\Vert \tilde{g}_k \Vert^2} = \Exp{ \gamma^{-1}\Vert u_{k+1}-u_k  \Vert^2} \\
      +\Exp{ \gamma^{-1}  \Vert  a (\tilde{g}_k - \tilde{g}_{k-1})\Vert^2 + 2 a \gamma^{-1}  \langle u_{k+1}-u_k, \tilde{g}_k -\tilde{g}_{k-1} \rangle}.
    \end{split}
  \end{equation}
  Combining \cref{eq:squared-gradient-stoch,eq:crazy-stoch,eq:first-stoch} we deduce
  \begin{multline*}
    \Exp{\Vert u_{k+1} + a \tilde{g}_k-u^* \Vert^2}  + a \gamma(a - \rho) \Exp{\Vert g_k \Vert^2} \le \Exp{\Vert u_k + a \tilde{g}_{k-1} -u^* \Vert^2} - a^2 (1+\gamma^{-1}) \Exp{\Vert \tilde{g}_k-\tilde{g}_{k-1} \Vert^2} \\
    - (\gamma^{-1} -1) \Exp{\Vert u_{k+1}-u_k \Vert^2} + 2 a \gamma^{-1} \Exp{\langle u_k - u_{k+1}, \tilde{g}_k -\tilde{g}_{k-1} \rangle}.
  \end{multline*}
  We get via Young's inequality
  \begin{equation*}
    2 a \gamma^{-1} \langle u_k - u_{k+1}, \tilde{g}_k -\tilde{g}_{k-1} \rangle \le
    a \gamma^{-1}L \Vert u_k -u_{k+1} \Vert^2 + L^{-1}a \gamma^{-1} \Vert \tilde{g}_k - \tilde{g}_{k-1} \Vert^2.
  \end{equation*}
  Combining the previous two inequalities
  \begin{multline}
    \label{eq:lala2-stoch}
    \Exp{\Vert u_{k+1}+ a \tilde{g}_k-u^* \Vert^2} + a \gamma(a-\rho) \Exp{\Vert g_k \Vert^2} \le \Exp{\Vert u_k + a \tilde{g}_{k-1} -u^* \Vert^2} \\
    + (L^{-1}a \gamma^{-1} - a^2 (1+\gamma^{-1}))\Exp{\Vert \tilde{g}_k-\tilde{g}_{k-1} \Vert^2}
    - (\gamma^{-1}-1 - La \gamma^{-1}) \Exp{\Vert u_{k+1}-u_k \Vert^2}.
  \end{multline}
  Next, we need to estimate the difference of the gradient estimators via the difference of the true
  \begin{equation*}
    \begin{aligned}
      \Vert \tilde{g}_k - \tilde{g}_{k-1} \Vert^2 &\le \left(1+\frac1\lambda\right) \Vert g_k - g_{k-1} \Vert^2 + (1+\lambda) \Vert g_k - \tilde{g}_k + \tilde{g}_{k-1}- g_{k-1} \Vert^2 \\
      &\le \left(1+\frac{1}{\lambda}\right) L^2 \Vert u_k - u_{k-1} \Vert^2 + 2(1+\lambda) \left( \Vert g_k - \tilde{g}_k \Vert^2 + \Vert \tilde{g}_{k-1}- g_{k-1} \Vert^2\right),
    \end{aligned}
  \end{equation*}
  where we used the Lipschitz continuity of the operator.
  Therefore, by taking the expectation, we obtain
  \begin{equation}
    \label{eq:est-grad-diff}
    \Exp{\Vert \tilde{g}_k - \tilde{g}_{k-1} \Vert^2} \le \left( 1+\frac{1}{\lambda} \right) L^2 \Exp{\Vert u_k - u_{k-1} \Vert^2} + 4(1+\lambda) \tilde{\sigma}^2.
  \end{equation}
  Plugging~\eqref{eq:est-grad-diff} into~\eqref{eq:lala2-stoch} we deduce
  \begin{multline*}
    \Exp{\Vert u_{k+1}+ a g_k-u^* \Vert^2} + a \gamma(a-\rho) \Exp{\Vert g_k \Vert^2} \le \Exp{\Vert u_k + a g_{k-1} -u^* \Vert^2} + 4(1+\lambda) \max\{0, L^{-1}a \gamma^{-1} - a^2 (1+\gamma^{-1}) \} \tilde{\sigma}^2\\
    + \max\Big\{0, (1+\lambda^{-1})(La \gamma^{-1} - a^2L^2 (1+\gamma^{-1}))\Big\} \Delta_{k}
    - (\gamma^{-1} -1 - a L \gamma^{-1}) \Delta_{k+1}.
  \end{multline*}
\end{proof}

\begin{thm:ogda-stoch}%
  Let $F:\R^d \to \R^d$ be $L$-Lipschitz satisfying \cref{ass:weak-minty} with $\frac{1}{L} > \rho$ and let ${(u_k)}_{k\ge0}$ be the sequence of iterates generated by stochastic OGDA$+$, with $a$ and $\gamma$ satisfying $\rho < a< \frac{1-\gamma}{1+\gamma} \frac1L$
  then, to visit an $\epsilon$ stationary point such that $\min_{i=0,\dots,k-1} \Exp{\Vert F(u_i) \Vert^2} \le \epsilon $ we require
  \begin{equation*}
    \mathcal{O}\left(\frac{1}{\epsilon} \frac{1}{a \gamma (a - \rho)}\Exp{\Vert u_0 + a \tilde{g}_0 -u^* \Vert^2} \max\left\{1, \frac{4 \sigma^2}{aL } \frac{1}{\epsilon}\right\} \right)
  \end{equation*}
  calls to the stochastic oracle, with large batch sizes of order $\mathcal{O}(\epsilon^{-1})$.
\end{thm:ogda-stoch}
\begin{proof}
  Let us first note that the condition $\alpha \le \frac{1-\gamma}{1+\gamma}$ implies that the positive part in $\tilde{C}_1$ is redundant as the second term in the maximum
  \begin{equation*}
    \alpha \gamma^{-1} - \alpha^2(1+\gamma^{-1}) > \alpha \left(\gamma^{-1} - \frac{1-\gamma}{1+\gamma}(1+\gamma^{-1})\right) = \alpha \left( \frac{\gamma^{-1} + 1 - (\gamma^{-1}-\gamma)}{1+\gamma} \right) = \alpha \ge 0,
  \end{equation*}
  is already nonnegative.
  Next we remark that the statement
  \begin{equation}
    \label{eq:telescope-stoch}
    \tilde{C}_2 \ge (1+\lambda^{-1}) \tilde{C}_1
  \end{equation}
  for some $\lambda>0$ is equivalent to $\tilde{C}_2 > \tilde{C}_1$ (with strict inequality). This is, however, precisely the condition of \Cref{thm:generalized} but with strict inequality, which is the reason why we require the strict inequality $\alpha < \frac{1-\gamma}{1+\gamma}$ to ensure \Cref{eq:telescope-stoch}.
  So we can iteratively apply the statement of \cref{lem:ogda-stochastic-descent} to deduce
  \begin{equation*}
     a \gamma (a -\rho) \sum_{i=0}^{k-1}\Vert g_i \Vert^2 \le \Exp{\Vert u_0 + a g_0 -u^* \Vert^2} + 4(1+\lambda^{-1})\sum_{i=0}^{k-1}\sigma_{i}^2.
  \end{equation*}
  We still need to estimate $\lambda^{-1}$ to find the right batch size in order to decrease the last summand to the desired accuracy. By considering \Cref{eq:telescope-stoch} we get
  \begin{equation*}
    (1+\lambda^{-1}) \le \frac{\tilde{C}_2}{\tilde{C}_1} = \frac{\gamma^{-1}-1-\alpha \gamma^{-1}}{\alpha(\gamma^{-1}-\alpha - \alpha \gamma^{-1})} \overset{\alpha<1}{\le} \frac{\gamma^{-1}-1-\alpha \gamma^{-1}}{\alpha(\gamma^{-1}-1 - \alpha \gamma^{-1})} = \frac{1}{\alpha}
  \end{equation*}
  By taking $B:=\max\{1, \frac{4\sigma^2}{\alpha \epsilon}\}$ independent samples per iteration, we get the variance
  \begin{equation*}
    \tilde{\sigma}^2 = \frac{\alpha \epsilon}{4},
  \end{equation*}
  and thus arrive at a total oracle call complexity as claimed.
\end{proof}


\subsection{EG$+$ with adaptive step size}%


\begin{lemma}%
  \label{lem:adap-eg}
  Let $F:\R^d \to \R^d$ be $L$-Lipschitz and satisfy \cref{ass:weak-minty}.
  Then, for the iterates generated by \Cref{egpa}, it holds that
 \begin{multline}
   \label{eq:adap-eg-lem}
    \gamma^{-1}\Vert \bar{u}_{k+1} - u^* \Vert^2 \le \gamma^{-1}\Vert \bar{u}_k - u^* \Vert^2 - a_k(a_k \gamma(1-\gamma) -\rho) \Vert F(u_k) \Vert^2 \\
   - \Big(1 - \frac{\tau a_k}{a_{k+1}}\Big)(\Vert u_k - \bar{u}_k \Vert^2 + \Vert u_k - \bar{u}_{k+1} \Vert^2).
 \end{multline}
\end{lemma}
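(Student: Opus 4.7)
The plan is to adapt the standard EG-style descent argument to the adaptive step size setting, using \Cref{ass:weak-minty} to control the solution-gap term and the step size rule to handle the Lipschitz-type term. I would proceed in three stages.

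First, expand $\Vert \bar{u}_{k+1}-u^*\Vert^2$ using the update $\bar{u}_{k+1}=\bar{u}_k - a_k\gamma F(u_k)$ and split $\bar{u}_k-u^* = (\bar{u}_k-u_k)+(u_k-u^*)$. The inner product $-2a_k\gamma\langle F(u_k), u_k-u^*\rangle$ is bounded via weak Minty by $a_k\gamma\rho\Vert F(u_k)\Vert^2$, producing the ``tolerable'' positive $\rho$-term.

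Second, handle the residual cross term $-2a_k\gamma\langle F(u_k), \bar{u}_k-u_k\rangle$: using $\bar{u}_k-u_k = a_k F(\bar{u}_k)$ it becomes $-2a_k^2\gamma\langle F(u_k), F(\bar{u}_k)\rangle$, and the polarization identity $-2\langle a,b\rangle = \Vert a-b\Vert^2 - \Vert a\Vert^2 - \Vert b\Vert^2$ rewrites it as
\begin{equation*}
a_k^2\gamma\Vert F(u_k)-F(\bar{u}_k)\Vert^2 - a_k^2\gamma\Vert F(u_k)\Vert^2 - \gamma\Vert u_k-\bar{u}_k\Vert^2,
\end{equation*}
where I have used $a_k^2\Vert F(\bar{u}_k)\Vert^2 = \Vert u_k-\bar{u}_k\Vert^2$. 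Collecting yields the intermediate estimate
\begin{equation*}
\Vert\bar{u}_{k+1}-u^*\Vert^2 \le \Vert\bar{u}_k-u^*\Vert^2 - a_k\gamma\bigl(a_k(1-\gamma)-\rho\bigr)\Vert F(u_k)\Vert^2 - \gamma\Vert u_k-\bar{u}_k\Vert^2 + a_k^2\gamma\Vert F(u_k)-F(\bar{u}_k)\Vert^2.
\end{equation*}

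Third, invoke the adaptive step size rule, which gives $a_k\Vert F(u_k)-F(\bar{u}_k)\Vert\le (\tau a_k/a_{k+1})\Vert u_k-\bar{u}_k\Vert$ and hence $a_k^2\Vert F(u_k)-F(\bar{u}_k)\Vert^2 \le (\tau a_k/a_{k+1})^2\Vert u_k-\bar{u}_k\Vert^2$. Once $\tau a_k/a_{k+1}\le 1$ (which holds for all $k\ge k_0$ by monotonicity of $(a_k)$ and the lower bound $a_k\ge\min\{a_0,\tau/L\}$ noted earlier in the paper), this is further dominated by $(\tau a_k/a_{k+1})\Vert u_k-\bar{u}_k\Vert^2$, yielding the desired linear factor. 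To introduce the additional $\Vert u_k-\bar{u}_{k+1}\Vert^2$ term appearing in the claim, I would invoke the auxiliary polarization identity
\begin{equation*}
\Vert u_k-\bar{u}_{k+1}\Vert^2 = (1-\gamma)\Vert u_k-\bar{u}_k\Vert^2 + a_k^2\gamma\Vert F(u_k)-F(\bar{u}_k)\Vert^2 - a_k^2\gamma(1-\gamma)\Vert F(u_k)\Vert^2,
\end{equation*}
obtained by expanding $\Vert a_k F(\bar{u}_k) - a_k\gamma F(u_k)\Vert^2$ and again applying the polarization trick. This allows one to trade part of the Lipschitz-type term for a $\Vert u_k-\bar{u}_{k+1}\Vert^2$ contribution at the cost of picking up an additional $\gamma$ in front of $\Vert F(u_k)\Vert^2$. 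Dividing by $\gamma$ and regrouping then delivers the stated inequality.

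The main technical obstacle is the balance in the third stage: one has to extract both negative squared-norm terms $\Vert u_k-\bar{u}_k\Vert^2$ and $\Vert u_k-\bar{u}_{k+1}\Vert^2$ with a \emph{common} linear factor $(1-\tau a_k/a_{k+1})$, while still keeping a useful negative coefficient in front of $\Vert F(u_k)\Vert^2$. This requires trading factors of $\gamma$ carefully between the polarization identity for $\Vert u_k-\bar{u}_{k+1}\Vert^2$ and the step size bound (also using $(\tau a_k/a_{k+1})^2\le \tau a_k/a_{k+1}$ to convert the square into a linear dependence), and explains why the $\Vert F(u_k)\Vert^2$ coefficient in the statement carries the extra $\gamma$ factor compared with what the naive route produces.
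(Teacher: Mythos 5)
Your first two stages take a genuinely different route from the paper: you expand $\Vert\bar{u}_{k+1}-u^*\Vert^2$ directly, split $\bar{u}_k-u^*=(\bar{u}_k-u_k)+(u_k-u^*)$, and apply polarization to $-2a_k^2\gamma\langle F(u_k),F(\bar{u}_k)\rangle$. The paper instead starts from the weak Minty inequality applied to $\langle a_kF(u_k),u_k-u^*\rangle$, decomposes it as $\langle a_kF(u_k),\bar{u}_{k+1}-u^*\rangle+\langle a_kF(\bar{u}_k),u_k-\bar{u}_{k+1}\rangle+a_k\langle F(u_k)-F(\bar{u}_k),u_k-\bar{u}_{k+1}\rangle$, and evaluates the first two by the three-point identity. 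Your intermediate estimate is correct and, combined with the squared step-size bound, would suffice to prove \Cref{thm:egpa} (the $\Vert u_k-\bar{u}_{k+1}\Vert^2$ term is dropped there anyway), so the core of your argument is usable.

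Stage three, however, does not close. Your auxiliary identity for $\Vert u_k-\bar{u}_{k+1}\Vert^2$ is correct, but after you substitute it and invoke $a_k^2\Vert F(u_k)-F(\bar{u}_k)\Vert^2\le t^2\Vert u_k-\bar{u}_k\Vert^2$ with $t:=\tau a_k/a_{k+1}$, the leftover coefficient of $\Vert u_k-\bar{u}_k\Vert^2$ comes out to $(t-1)^2\bigl((1-\gamma)-\gamma t\bigr)$. For $\gamma=\tfrac12$ (the value on which \Cref{thm:egpa} relies) and $t<1$ this equals $\tfrac12(1-t)^3>0$, and it cannot be compensated by the remaining $\Vert F(u_k)\Vert^2$ budget, because $\Vert u_k-\bar{u}_k\Vert^2=a_k^2\Vert F(\bar{u}_k)\Vert^2$ is not controlled by $\Vert F(u_k)\Vert^2$; the trade only closes for $\gamma$ strictly larger than $\tfrac12$ and $t\ge(1-\gamma)/\gamma$. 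The root cause is that you square the step-size rule, which ties the Lipschitz-type error exclusively to $\Vert u_k-\bar{u}_k\Vert^2$. The paper never squares it: it keeps the error as the inner product $a_k\langle F(u_k)-F(\bar{u}_k),u_k-\bar{u}_{k+1}\rangle$, bounds it by Cauchy--Schwarz, applies the step-size rule in the linear form $a_k\Vert F(u_k)-F(\bar{u}_k)\Vert\le t\Vert u_k-\bar{u}_k\Vert$, and only then Young's inequality, which produces $\Vert u_k-\bar{u}_k\Vert^2$ and $\Vert u_k-\bar{u}_{k+1}\Vert^2$ symmetrically with the common factor $t/2$. That is the missing trick. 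Finally, note the lemma as stated holds for every $k$ (if $1-\tau a_k/a_{k+1}<0$ the last term is merely slack), whereas your argument additionally imposes $\tau a_k/a_{k+1}\le 1$.
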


\begin{proof}[Proof of \Cref{lem:adap-eg}]
  We start by using \Cref{ass:weak-minty} splitting the following term into three
  \begin{equation}
    \label{eq:first-adapt}
    \begin{aligned}
    \frac{\rho}{2}\Vert F(u_k) \Vert^2 &\le \langle a_k F(u_k), u_k -u^* \rangle \\
    &= \langle a_k F(u_k), \bar{u}_{k+1}-u^* \rangle + \langle a_k F(\bar{u}_k), u_k - \bar{u}_{k+1} \rangle + a_k\langle  F(u_k) - F(\bar{u}_k), u_k - \bar{u}_{k+1} \rangle.
    \end{aligned}
  \end{equation}
  By expressing $F(u_k)$ via the definition of $\bar{u}_{k+1}$ and the three point identity we obtain
  \begin{equation}
    \label{eq:second-adapt}
    \begin{aligned}
      \langle a_k F(u_k), \bar{u}_{k+1}-u^* \rangle &= \gamma^{-1}\langle \bar{u}_k-\bar{u}_{k+1}, \bar{u}_{k+1}-u^* \rangle \\
      &= \frac{\gamma^{-1}}{2} (\Vert \bar{u}_k -u^* \Vert^2 - \Vert \bar{u}_{k+1}-\bar{u}_k \Vert^2- \Vert \bar{u}_{k+1}-u^* \Vert^2).
    \end{aligned}
  \end{equation}
  Similarly, by expressing $F(\bar{u}_k)$ via the definition of $u_k$ we deduce
  \begin{equation}
    \label{eq:third-adapt}
    \begin{aligned}
      \langle a_k F(\bar{u}_k), u_k - \bar{u}_{k+1} \rangle &= \langle \bar{u}_k-u_k, u_k-\bar{u}_{k+1} \rangle \\
      &= \frac{1}{2} \left( \Vert \bar{u}_{k+1}-\bar{u}_k \Vert^2 - \Vert u_k-\bar{u}_k \Vert^2 - \Vert u_k- \bar{u}_{k+1} \Vert^2\right).
    \end{aligned}
  \end{equation}
  Lastly, via the Cauchy-Schwarz inequality
  \begin{equation}
    \label{eq:fourth-adapt}
    \begin{aligned}
      a_k\langle  F(u_k) - F(\bar{u}_k), u_k - \bar{u}_{k+1} \rangle &\le a_k  \Vert F(u_k) - F(\bar{u}_k) \Vert \Vert  u_k - \bar{u}_{k+1} \Vert \\
      &\overset{\eqref{eq:adap-step size-eg}}{\le} \frac{a_k \tau}{a_{k+1}}  \Vert u_k - \bar{u}_k \Vert \Vert  u_k - \bar{u}_{k+1} \Vert \\
      &\le \frac{a_k \tau}{2a_{k+1}} (\Vert u_k-\bar{u}_k \Vert^2 + \Vert u_k-\bar{u}_{k+1} \Vert^2).
    \end{aligned}
  \end{equation}
  Combining \Cref{eq:first-adapt,eq:second-adapt,eq:third-adapt,eq:fourth-adapt} and multiplying by $2$ we get
  \begin{multline*}
    \rho \Vert F(u_k) \Vert^2 \le \gamma^{-1}\left( \Vert \bar{u}_{k} - u^* \Vert^2 - \Vert \bar{u}_{k+1} - u^* \Vert^2\right) -(\gamma^{-1}-1)\Vert \bar{u}_{k+1}-\bar{u}_k \Vert^2 \\
    -\left(1- \frac{\tau a_k}{a_{k+1}}\right) \left( \Vert u_k - \bar{u}_k \Vert^2 + \Vert u_k - \bar{u}_{k+1} \Vert^2 \right).
  \end{multline*}
  Using the observation that $\gamma a_k F(u_k) = \bar{u}_{k+1} - \bar{u}_k$, gives
 \begin{multline*}
   \label{eq:telescope-adap}
   \gamma^{-1}\Vert \bar{u}_{k+1} - u^* \Vert^2 + a_k(a_k \gamma(1-\gamma) -\rho) \Vert F(u_k) \Vert^2\le \gamma^{-1}\Vert \bar{u}_k - u^* \Vert^2 \\
   - \Big(1 - \frac{\tau a_k}{a_{k+1}}\Big)(\Vert u_k - \bar{u}_k \Vert^2 + \Vert u_k - \bar{u}_{k+1} \Vert^2).
 \end{multline*}
 We see that the largest possible range for $\rho$ is achieved for $\gamma=\frac12$.
\end{proof}

\begin{thm:egpa}
  Let $F:\R^d \to \R^d$ be $L$-Lipschitz that satisfies \Cref{ass:weak-minty}, where $u^*$ denotes any weak Minty solution, with $a_\infty > 2\rho$ and let ${(u_k)}_{k\ge0}$ be the iterates generated by \Cref{egpa} with $\gamma=\frac12$ and $\tau\in (0, 1)$. Then, there exists a $k_0 \in \N$, such that
  \begin{equation*}
    \min_{i=k_0, \dots, k} \Vert F(u_k) \Vert^2 \le \frac{1}{k-k_0}\frac{L}{\tau(\frac{a_\infty}{2}-\rho)} \Vert \bar{u}_{k_0}-u^* \Vert^2.
  \end{equation*}
\end{thm:egpa}

\begin{proof} 
  As $a_k$ converges to $a_\infty$, the quotient $a_k / a_{k+1}$ converges to $1$. In particular, there exists an index $k_0 \in \N$ such that $a_{k} / a_{k+1} \le \frac{1}{\tau}$ for all $k\ge k_0$, because $\tau < 1 $. We can therefore drop the last term in \Cref{eq:adap-eg-lem} and sum up to obtain
  \begin{equation*}
   \Vert \bar{u}_{k+1} - u^* \Vert^2 + \sum_{i=k_0}^{k}a_i\left(\frac{a_i}{2} -\rho\right) \Vert F(u_k) \Vert^2 \le \Vert \bar{u}_{k_0} - u^* \Vert^2.
  \end{equation*}
  The desired statement follows by observing that $a_i \ge a_\infty \ge \tau/L$.
\end{proof}

Note that the above proof for the adaptive version of EG$+$ provides an improvement in the dependence between $\rho$ and $L$ over the analysis of~\cite{diakonikolas2021efficient} even in the constant step size regime.

\section{Additional statements and proofs}%

For the sake of completeness we provide a proof of the elementary fact that Minty solutions are a stronger requirement than Stampachia solutions.
\begin{lemma}%
  \label{lem:}
  If $F$ is continuous then every Minty solution is also a Stampacchia solution.
\end{lemma}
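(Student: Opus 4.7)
The plan is to apply the standard ``Minty trick'': parametrize a line segment from the candidate solution $u^*$ toward an arbitrary test point in $C$ and pass to the limit using continuity of $F$. I will assume throughout, as is implicit in the paper's treatment of VIs, that $C$ is convex, so that the segment lies inside $C$. (In the unconstrained case $C = \R^d$ this is automatic.)

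First, fix an arbitrary $v \in C$ and, for $t \in (0,1]$, set $u_t := (1-t)u^* + t v \in C$. Applying the Minty property \eqref{eq:minty} at $u = u_t$ yields
\begin{equation*}
  0 \le \langle F(u_t),\, u_t - u^* \rangle = t \, \langle F(u_t),\, v - u^* \rangle.
\end{equation*}
Since $t > 0$, dividing gives $\langle F(u_t), v - u^* \rangle \ge 0$ for every $t \in (0,1]$.

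Second, let $t \to 0^+$. Then $u_t \to u^*$, and by continuity of $F$ we have $F(u_t) \to F(u^*)$, so the inner product passes to the limit and we obtain $\langle F(u^*), v - u^* \rangle \ge 0$. As $v \in C$ was arbitrary, this is exactly the Stampacchia condition \eqref{eq:vi}, completing the argument.

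There is essentially no obstacle here; the only point worth flagging is that the argument uses convexity of $C$ (to ensure $u_t \in C$) and continuity of $F$ (to pass to the limit in the inner product), both of which are standard in this context. No differentiability, monotonicity, or compactness hypothesis is needed.
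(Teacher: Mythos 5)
Your proof is correct and is essentially identical to the paper's: both use the standard Minty trick of evaluating the Minty inequality along the segment from $u^*$ to an arbitrary test point, dividing out the scalar factor, and passing to the limit by continuity (your $t\to 0^+$ corresponds to the paper's $\alpha\to 1$ under $t=1-\alpha$).
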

\begin{proof}
  Let $w^*$ be a solution to the Minty VI and $z=\alpha w^* +(1-\alpha)u$ for an arbitrary $u \in \R^m$ and $\alpha\in(0,1)$, then
  \begin{equation*}
    \langle F(\alpha w^* +(1-\alpha)u), (1-\alpha)(u-w^*) \rangle  \ge0.
  \end{equation*}
  This implies that
  \begin{equation*}
    (1-\alpha)\langle F(\alpha w^* +(1-\alpha)u), (u-w^*) \rangle \ge0.
  \end{equation*}
  By dividing by $(1-\alpha)$ and then taking the limit $\alpha\to1$ we obtain that $w^*$ is a solution of the Stampacchia formulation.
\end{proof}

\section{Numerics}%

For all experiments, if not specified otherwise, we used for OGDA$+$ and the adaptive version of EG$+$ the parameter $\gamma= \frac12$. For the step size choice of \Cref{egpa} we use $\tau=0.99$. For the CurvatureEG$+$ method of~\cite{pethick2022escaping} (with their notation) we use $\delta_k$ equal to $-\rho/2$, where $\rho$ is the weak Minty parameter, if it is known and less than $1/L$; and $-0.499$ times the step size, otherwise. Furthermore we set the parameters of the linesearch to $\tau=0.9$ and $\nu=0.99$.

\subsection{Ratio game}%

The data used to generate the instance displayed in \Cref{fig:ratio-game-exotic} was suggested in~\cite{daskalakis2020independent} and is given by
\begin{equation*}
  R = \left(\begin{matrix}
    -0.6 & -0.3 \\
    0.6 & -0.3
  \end{matrix}\right)
\end{equation*}
and
\begin{equation*}
  S = \left(\begin{matrix}
    0.9 & 0.5 \\
    0.8 & 0.4 \\
  \end{matrix}\right).
\end{equation*}
This results in the following objective function for the min-max problem
\begin{equation*}
 V(x,y) = \frac{-1.2xy + 0.9y -0.3}{0.4y +0.1x +0.4},
\end{equation*}
which gives rise to the optimality conditions
\begin{equation*}
  -0.12x^* - 0.39x + 0.48 = 0
\end{equation*}
and
\begin{equation*}
  -0.48y^2 - 0.57y + 0.03 = 0
\end{equation*}
with the solution $(x^*, y^*) = (0.951941, 0.050485)$. For the experiments we used an estimated Lipschitz constant $L=\frac53$.

In \Cref{fig:ratio-game-exotic-step size} we can see the reason for the slow convergence behavior of the CurvatureEG$+$ method observed in \Cref{fig:ratio-game-exotic}. Not only is the step size computed by the backtracking procedure smaller than the one chosen by adaptive EG$+$, see~\eqref{eq:adap-step size-eg}, also the second step (update step) uses an even smaller fraction of the already smaller extrapolation step size.
\begin{figure*}[ht]
  \centering
  \begin{subfigure}[b]{0.4\linewidth}
    \centering
    \includegraphics[width=\linewidth]{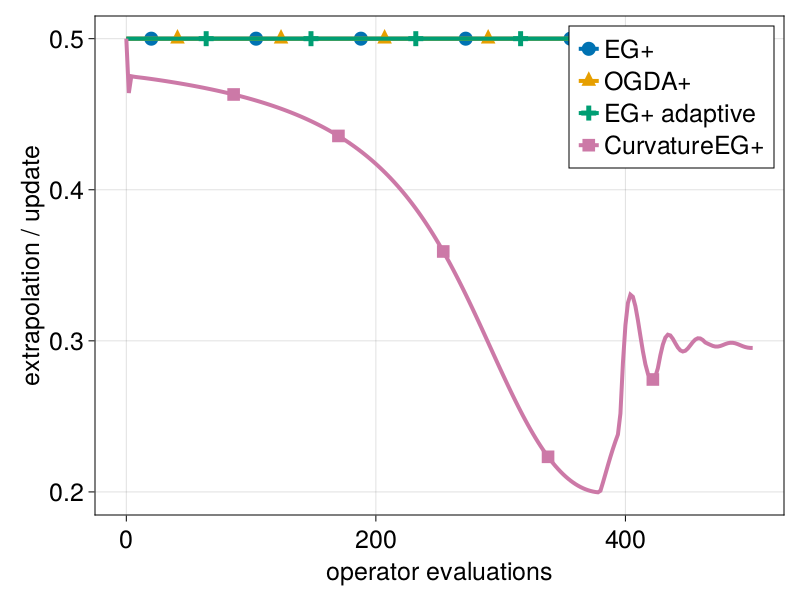}
    \caption{Ratio of extrapolation to update step}\label{fig:exotic-ratio}
  \end{subfigure}
  \hspace{.5cm}
  \begin{subfigure}[b]{0.4\linewidth}
    \centering
    \includegraphics[width=\linewidth]{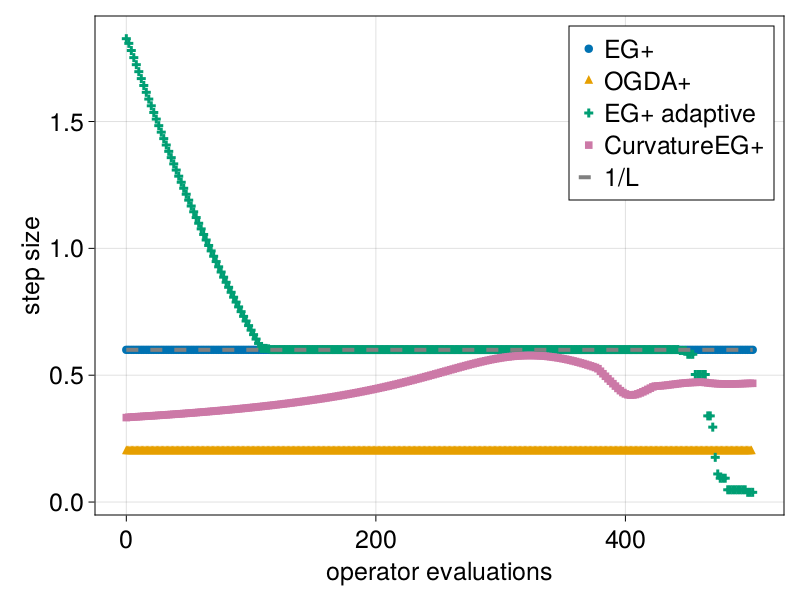}
    \caption{Extrapolation step size.}\label{fig:exotic-step size}
  \end{subfigure}
  \caption{\textbf{Ratio game}.
    An illustration of the step sizes used by different methods, as well as the ratio of extrapolation to update step. CurvatureEG$+$ chooses its own ratio adaptively and does so for this example in a seemingly overly conservative way, resulting in slow convergence observed in \Cref{fig:ratio-game-exotic}.}%
  \label{fig:ratio-game-exotic-step size}
\end{figure*}


\subsubsection{Polar Game}%

For \Cref{fig:polar-game} we used the so-called Polar Game introduced in~\cite{pethick2022escaping} which is given by
\begin{equation}
  F(x,y) = (\psi(x,y)-y, \psi(y,x)+x),
\end{equation}
where $\psi(x,y) = \frac{1}{16} a x(-1+x^2+y^2)(-9+16x^2 + 16y^2)$ and parameter $a>0$. In \Cref{fig:polar-game} we used $a=\frac13$.

\end{document}